\newif\iffigures
\newif\ifpageone
\renewcommand{\baselinestretch}{1.3}
\newcommand{\red}[1]{\textcolor{red}{#1}}
\newcommand{\bs}[1]{\boldsymbol{#1}}
\def \d {\mathrm{d}}
\def \N {\mathbb{N}}
\def \S {\mathcal{S}}
\def \R {\mathbb{R}}
\def \aa {\bs\alpha}
\def \ss {\bs\sigma}
\def \oo {\mathbf{0}}
\def \mm {\mathbf{m}}
\def \MM {\mathbf{M}}
\def \nn {\mathbf{n}}
\def \NN {\mathbf{N}}
\def \JJ {\mathbf{J}}
\def \xx {\mathbf{x}}
\def \XX {\mathbf{X}}
\def \yy {\mathbf{y}}
\def \YY {\mathbf{Y}}
\newcommand{\set}[1]{\left\{#1\right\}}
\newtheorem{theorem}{Theorem}[section]
\newtheorem{proposition}[theorem]{Proposition}
\newtheorem{definition1}[theorem]{Definition} 
\newtheorem{remark1}[theorem]{Remark} 
\long\def\symbolfootnote[#1]#2{\begingroup\def\thefootnote{\hspace*{-1mm}\fnsymbol{footnote}}\footnote[#1]{#2}\endgroup}
\title{\bf 
Filtering coupled Wright--Fisher diffusions
}
\author{
\normalsize\textsc{Chiara Boetti}\\
\normalsize\emph{University of Bath}\\[3mm]
\normalsize\textsc{Matteo Ruggiero}\\
\normalsize\emph{University of Torino and Collegio Carlo Alberto}}
\date{\today}
\begin{document}
\maketitle
\thispagestyle{empty}

\vspace{-5mm}

\begin{quote}
\small

Coupled Wright--Fisher diffusions have been recently introduced to model the temporal evolution of finitely-many allele frequencies at several loci. These are vectors of multidimensional diffusions whose dynamics are weakly coupled among loci through interaction coefficients, which make the reproductive rates for each allele depend on its frequencies at several loci. Here we consider the problem of filtering a coupled Wright--Fisher diffusion with parent-independent mutation, when this is seen as an unobserved signal in a hidden Markov model. 
We assume individuals are sampled multinomially at discrete times from the underlying population, whose type configuration at the loci is described by the diffusion states, and adapt recently introduced duality methods to derive the filtering and smoothing distributions. These respectively provide the  conditional distribution of the diffusion states given past data, and that conditional on the entire dataset, and are key to be able to perform parameter inference on models of this type. We show that for this model these distributions are countable mixtures of tilted products of Dirichlet kernels, and describe their mixing weights and how these can be updated sequentially. The evaluation of the weights involves the transition probabilities of the dual process, which are not available in closed form. We lay out pseudo codes for the implementation of the algorithms, discuss how to handle the unavailable quantities, and briefly illustrate the procedure with synthetic data. 

\textbf{Keywords:} Duality; Bayesian inference; Hidden Markov model; Smoothing; Reversibility.
\end{quote}

\vfill
\newpage

\section{Introduction}

Wright--Fisher (WF) Markov chains and their diffusion approximations are among the most widely used stochastic models in mathematical biology. See, e.g., \cite{M77,E79,EK86,D93,E09,F10}. Recently they have also been seen as a temporally evolving parameter which becomes the inferential goal in a Bayesian statistical problem; see \cite{CG09,PR14,KKK21,KKK24}. See also \cite{PRS16,ALR21,ALR23} for nonparametric extensions involving Fleming--Viot and Dawson--Watanabe measure-valued diffusions and \cite{ADR24} for related software.

Coupled Wright--Fisher (c-WF) diffusions extend classical WF diffusions to weakly dependent vectors of WF diffusions. 
These vectors model the temporal evolution of allelic frequencies in a multi-locus multi-allelic process when pairwise interactions can occur among different loci. Pairwise interactions are expressed in terms of selection coefficients, leading to weak coupling mechanisms as they affect the reproduction rates of the alleles. This class of models was introduced by \cite{AEK19} as the scaling limit of a Moran-type population of haploid, asexually reproducing individuals with mutation and inter-locus selection, motivated by the evolution of certain populations of bacteria where the dependence between loci can be effectively encoded by a selection mechanism. Setting the inter-locus selection parameters as null yields as a special case a collection of independent WF diffusions with selection as they appear, e.g., in \cite{BEG00}. See also \cite{E04}, Chapters 6-7, and \cite{EN89} for previous contributions related to the multi-locus case.
\cite{FHK21} identified a dual jump process for c-WF diffusions (the notion of duality will be recalled in Section \ref{sec:duality} below), while \cite{GHK21} investigated their exact simulation when mutations are parent-independent and the stationary distribution is available.
Different and stronger coupling mechanisms, which for example produce interacting diffusions via migration of individuals in underlying structured populations, had been previously studied, among others, in \cite{V90,DGV95,DG99, GKW01,GLW05}. See also \cite{PR13} for a Bayesian nonparametric interpretation of a class of interacting Fleming--Viot models through P\'olya urn schemes.

In this paper we study the distributional properties of c-WF diffusions with parent-independent mutations in presence of data collected at discrete times from the underlying population. We consider a hidden Markov model statistical framework \citep{CMR05}, whereby the c-WF diffusion describes a temporally evolving unobserved \emph{signal}, which is the primary object of inference, and we assume incomplete observations are collected at a set of discrete time points from an appropriate distribution which reflects the underlying population structure. In particular, since the data collection reveals partial information on the population composition through a non linear link, this inferential problem does not fall into the realm of Kalman-type filtering \citep{BC09,CMR05}.
Given such data collection framework, we identify the conditional distributions of the diffusion states given past data (\emph{signal prediction}), given past and present data (\emph{filtering}) and given the entire dataset, including data collected at later times (\emph{marginal smoothing}). These distributions constitute the fundamental tools in view of performing parameter inference for a hidden Markov model. The latter, in particular, is often used to improve previous estimates, obtained through filtering, once new data are collected at later times, which typically result in smoother estimates.  We obtain these results by adapting a strategy based on duality proposed by \cite{KKK24}, leveraging on the knowledge of the dual process which was investigated in \cite{FHK21}. The implementation of the above ingredients and recipes is not immediate in that we first have to show that the model at hand satisfies certain requirements, which include showing the model is reversible, then we have to identify the family of likelihoods that allows to perform the envisioned calculations, and finally we have to identify correctly the quantities that appear in the final distributions. Our result provide a description of the recursions that allow to compute the filtering and the smoothing distributions, overcoming the unavailability of the transition function of the c-WF diffusion, which in principle is involved in the evaluation of these distributions. 

The paper is organized as follows. 
In Section \ref{sec: c-WF} we first recall generalities about c-WF diffusions, and then show that, under the assumption of parent-independent mutations, they are reversible with respect to their stationary distribution, which is a product of tilted Dirichlet distributions. We also recall the dual process of these diffusions, which is a multidimensional continuous-time  Markov chain.
In Section \ref{sec: filtering} we formulate the inferential problem of interest as a hidden Markov model driven by a c-WF diffusion and, leveraging on the shown reversibility, we derive the filtering and smoothing distributions when the data are assumed to be collected from products of multinomial densities parameterized by the signal at several discrete time points.
We show that the marginal distributions of c-WF states are mixtures of products of tilted Dirichlet distributions, and we describe how to update the time-dependent mixture weights based on the previous available conditional distribution. This in turn allows to devise algorithms that, starting from an initial distribution arbitrarily chosen within the same class as the stationary distribution, allow to recursively compute the marginal distributions of the process states, conditional on past data or conditionally on the entire dataset. These algorithms are laid out in Section \ref{sec:illustration}, which also presents a brief illustration of the filtering procedure with synthetic data. 
We also we discuss implementation issues, particularly concerning numerical methods for dealing with the dual process transitions, which are not available in closed form, and some other unavailable normalizing constants, through Monte Carlo procedures.



\section{Coupled Wright--Fisher diffusions}\label{sec: c-WF}


\subsection{Generalities}

Define the space
$$\Delta=\times_{l=1}^{L}\Delta_{K_{l}}, \quad 
\Delta_{K_{l}} = \bigg\{\xx\in[0,1]^{K_{l}} : \sum_{i=1}^{K_{l}} x_{i}^{(l)}=1\bigg\},$$
where $L \geq 1$ denotes the number of loci, $K_{1}, \ldots, K_{L}$ are positive integers representing the amount of alleles at each locus and $K = \sum_{l=1}^{L} K_{l}$ is the total number of alleles among all loci. 
Following \cite{FHK21}, a \emph{coupled Wright--Fisher} (c-WF) diffusion is the $\Delta$-valued strong solution of 
\begin{equation}\label{sde}
    \d\XX(t) = \mu(\XX(t)) \d t + D(\XX(t)) \nabla V(\XX(t)) \d t + D^{1/2}(\XX(t))\d\textbf{W}(t),
\end{equation}
where $\mathbf{W} = \left(\mathbf{W}^{(1)}, \ldots, \mathbf{W}^{(L)}\right)^{T}$ is a multi-dimensional Brownian motion, each $\mathbf{W}^{(l)}$ having the same dimension as $\mathbf{X}^{(l)}$. 
Here $\XX(t)$ is a concatenation of the $L$ vectors 
$$	\XX^{(l)}(t)=\Big(X_{1}^{(l)}(t), \ldots, X_{K_{l}}^{(l)}(t)\Big)^T,$$
where $X_{i}^{(l)}(t)$ is the relative frequency of the $i$-th type among those at locus $l$ at time $t$.
In particular, drift and diffusion coefficients are defined as
$$
\mu(\xx)=\left(\begin{array}{c}
	\mu^{(1)}\left(\xx^{(1)}\right) \\
	\vdots \\
	\mu^{(L)}\left(\xx^{(L)}\right)
\end{array}\right), \quad  \quad 
D(\xx)=\left(\begin{array}{ccc}
	D^{(1)}\left(\xx^{(1)}\right) & \\
	& \ddots & \\
	& & D^{(L)}\left(\xx^{(L)}\right)
\end{array}\right),
$$
with $\mu^{(l)}: \R^{K_{l}} \rightarrow \R^{K_{l}}$ and $D^{(l)}: \R^{K_{l}} \rightarrow \R^{K_{l} \times K_{l}}$, for all $l = 1, \ldots, L$. 
The drift vector $\mu$ drives mutations, which in our case are assumed to be parent-independent and independent of other loci, with the infinitesimal rate of mutation from type $i$ to $j$ at locus $l$ equal to $\alpha_{ij}^{(l)} = \alpha_{j}^{(l)}/2$. The drift component for type $i$ type at locus $l$ thus is
$$
\mu_{i}^{(l)}(\xx^{(l)}) = \sum_{j=1}^{K_l} (\alpha_{ji}^{(l)} x_j^{(l)} - \alpha_{ij}^{(l)} x_i^{(l)}) = \dfrac{1}{2} ( \alpha_{i}^{(l)} - |\aa^{(l)}| x_i^{(l)} ),
$$
where $|\aa^{(l)}| =\sum_{i=1}^{K_{l}} \alpha_{i}^{(l)}$ and $\aa^{(l)}=(\alpha_{1}^{(l)},\ldots,\alpha_{K_{l}}^{(l)})$.
The diffusion terms are of the usual form for WF-type models, namely
$$
d_{i j}^{(l)}(\xx^{(l)}) = x_{i}^{(l)}(\delta_{ij}-x_{j}^{(l)}), \quad\quad 
\delta_{ij} = \begin{cases}
	1, \quad \text {if } i=j \\
	0, \quad \text {if } i \neq j,
\end{cases}
$$
The coupling of the $L$ WF diffusions involved in the model is controlled by the \emph{selection} drift term $D \ \nabla V_{\ss, \JJ}$, where $V_{\ss, \JJ}: [0,1]^{K}  \rightarrow \R$ is defined as 
\begin{equation}\label{V}
	V_{\ss, \JJ}(\xx) = \xx^{T} \ss + \frac{1}{2} {\xx}^T \JJ \xx.
\end{equation}
Here $\ss \in \R_{+}^{K}$ is the vector of single-locus selection parameters, whereas $\JJ \in \R_{+}^{K\times K}$ is a symmetric block matrix containing  only two-locus selection parameters, that is $\JJ^{(lr)} = \left( \JJ^{(rl)}\right)^T \in \R^{K_l \times K_r}$ for all $l,r =1, \ldots, L$, with null sub-matrices $\JJ^{(ll)} = \textbf{O} \in \R^{K_l \times K_l}$ on the diagonal, where $\textbf{O}$ denotes a matrix of zeros. The interaction term $\eqref{V}$ is also known as \textit{fitness potential}, see \cite{AEK19} for further discussion. Thus \eqref{V} includes selection coefficients that apply both within a single locus, as in classical WF diffusions, and among different loci. In the latter case, these generate a weak interaction among the frequencies at these loci by affecting the respective reproduction rates.
Therefore \eqref{sde} is a system of $L$ equations, each reflecting the dynamics of the frequencies at a single locus, coupled through the drift term $D \ \nabla V_{\ss, \JJ}$. When $\nabla V_{\ss, \JJ} = 0$, there is no interaction among loci and \eqref{sde} reduces to a system of $L$ independent WF diffusions each solving 
$$
\d\XX^{(l)}(t) = \mu^{(l)}(\XX^{(l)}(t)) \d t + D^{(l) 1/2}(\XX^{(l)}(t))\d\textbf{W}^{(l)}(t).$$
Alternatively, we can define the c-WF diffusion through its infinitesimal generator. By definition of $V$, we have $\nabla V_{\ss, \JJ}(\xx) =\ss + \JJ\xx$, which implies that the components $g(\xx) := D(\xx) \nabla V_{\ss, \JJ}(\xx)$ are of the form $$
g_{i}^{(l)}(\xx)=\sum_{k=1}^{K_{l}} d_{i k}^{(l)}(\xx^{(l)}) \tilde{\sigma}_{k}^{(l)}(\xx) \quad \text { with } \quad \tilde{\sigma}_{k}^{(l)}(\xx)= \sigma_{k}^{(l)}+\sum_{\substack{r=1 \\ r \neq l}}^{L} \sum_{h=1}^{K_{r}} J_{kh}^{(l r)} x_{h}^{(r)}.
$$
Then the infinitesimal generator $\mathcal{A}$ of the c-WF diffusion, with  domain $D(\mathcal{A}) = C^2(\Delta)$, is
\begin{equation} \label{generator}
	\mathcal{A}  = \sum_{l=1}^{L} \sum_{i=1}^{K_{l}} \left( \mu_{i}^{(l)} ( \xx^{(l)} ) + g_{i}^{(l)} (\xx) \right) \frac{\partial}{\partial x_{i}^{(l)}} + \frac{1}{2} \sum_{l=1}^{L}\sum_{i, j=1}^{K_{l}} d_{i j}^{(l)} ( \xx^{(l)} ) \frac{\partial^{2}}{\partial x_{i}^{(l)} \partial x_{j}^{(l)}} .
\end{equation}
When $\JJ$ vanishes, \eqref{generator} reduces to the sum of $L$ generators 
\begin{equation}\label{indep WF}
\mathcal{A}_{0}^{(l)} = \sum_{i=1}^{K} \left[ \dfrac{1}{2}\left(\alpha_{i} - |\aa| x_i\right)  + \sum_{k=1}^{K} \sigma_k^{(l)} x_i (\delta_{ik} - x_k)\right]  \dfrac{\partial}{\partial x_i} + \dfrac{1}{2} \sum_{i,j=1}^{K} x_i \left( \delta_{ij} - x_j\right) \dfrac{\partial^2}{\partial x_i \partial x_j}
\end{equation} 
which describe a collection  of $L$ independent WF diffusions with selection.


\subsection{Reversibility}

It is well known that the Dirichlet distribution is the invariant measure of the single-locus WF diffusion with parent-independent mutation, cf.~\cite{EG93}, and a tilted Dirichlet distribution is the invariant measure in presence of single-locus selection, cf.~\cite{BEG00}. 
Denote by 
\begin{equation}\label{dirichlet}
\pi_{\aa}(\d\xx)= \xx^{\aa-1} \d\xx = \prod_{l=1}^{L} \left(\xx^{(l)}\right) ^{\aa^{(l)}-1} \d\xx^{(l)} = \prod_{l=1}^{L} \prod_{i=1}^{K_{l}} \big(x_{i}^{(l)}\big)^{\alpha_{i}^{(l)}-1} \d x_i^{(l)} 
\end{equation}
a product of unnormalised Dirichlet densities, where $\aa=(\aa^{(1)},\ldots,\aa^{(L)})$ and $\aa^{(l)}=(\alpha_{1}^{(l)},\ldots,\alpha_{K_{l}}^{(l)})$ as above.
When inter-locus selection is described by \eqref{V}, it was shown in \cite{AEK19} that the c-WF process has invariant distribution
\begin{equation}\label{stationary}
	p_{\aa,\ss,\JJ}(\d\xx) = Z^{-1} \pi_{\aa}(\d\xx) e^{2 V_{\ss, \JJ}(\xx)},
\end{equation}
where $Z$ denotes the normalising constant.
When $\JJ$ vanishes, we recover a product of stationary distribution of $L$ independent WF process as in \eqref{indep WF}, namely in each locus $l\in L$,
$$
p_{\aa, \ss}(\d\xx) \propto \xx^{\aa - 1} e^{2 \xx^T \ss} \d\xx 
= \prod_{i=1}^{K} x_{i}^{\alpha_{i} - 1} e^{2 x_{i} \sigma_{i}}  \d x_{i},
$$
similar to \cite{BEG00}.

Our first result extends the above stationarity and shows that \eqref{stationary} is also the reversible measure for \eqref{generator}. 

\begin{proposition}\label{prop:reversibility}
    The coupled Wright--Fisher diffusion with generator \eqref{generator} is reversible with respect to \eqref{stationary}.
\end{proposition}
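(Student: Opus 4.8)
The plan is to establish reversibility by showing that the generator $\A$ in \eqref{generator} is symmetric in $L^2(p_{\aa,\ss,\JJ})$, i.e.\ that $\int_\Delta f\,\A g\,\d p_{\aa,\ss,\JJ}=\int_\Delta g\,\A f\,\d p_{\aa,\ss,\JJ}$ for all $f,g\in D(\A)=C^2(\Delta)$; since $p_{\aa,\ss,\JJ}$ is already known to be invariant by \cite{AEK19}, this symmetry is equivalent to reversibility of the associated semigroup. The structural observation driving the argument is that $\A=\sum_{l=1}^L\A^{(l)}$ splits into $L$ summands, where $\A^{(l)}$ differentiates only in the $\xx^{(l)}$ variables, namely
$$\A^{(l)}=\sum_{i=1}^{K_l}\Big(\mu_i^{(l)}(\xx^{(l)})+\sum_{k=1}^{K_l}d_{ik}^{(l)}(\xx^{(l)})\,\tilde\sigma_k^{(l)}(\xx)\Big)\frac{\partial}{\partial x_i^{(l)}}+\frac12\sum_{i,j=1}^{K_l}d_{ij}^{(l)}(\xx^{(l)})\frac{\partial^2}{\partial x_i^{(l)}\partial x_j^{(l)}},$$
and that the effective selection coefficients $\tilde\sigma_k^{(l)}(\xx)=\sigma_k^{(l)}+\sum_{r\ne l}\sum_h J_{kh}^{(lr)}x_h^{(r)}$ depend on $\xx^{(-l)}$ but not on $\xx^{(l)}$. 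Hence, with the frequencies $\xx^{(-l)}$ at the other loci frozen, $\A^{(l)}$ is exactly the generator of a single-locus Wright--Fisher diffusion on $\Delta_{K_l}$ with parent-independent mutation rates $\alpha_i^{(l)}$ and single-locus selection vector $\tilde\ss^{(l)}(\xx^{(-l)})=(\tilde\sigma_1^{(l)}(\xx^{(-l)}),\dots,\tilde\sigma_{K_l}^{(l)}(\xx^{(-l)}))$, of the type appearing in \eqref{indep WF}.

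Next I would identify the conditional law $p_{\aa,\ss,\JJ}(\d\xx^{(l)}\mid\xx^{(-l)})$ and match it to the reversible measure of $\A^{(l)}$. Collecting the terms of $2V_{\ss,\JJ}(\xx)$ that depend on $\xx^{(l)}$ and using that $\JJ$ is symmetric with vanishing diagonal blocks $\JJ^{(ll)}=\mathbf{O}$, one obtains $2V_{\ss,\JJ}(\xx)=2(\xx^{(l)})^T\tilde\ss^{(l)}(\xx^{(-l)})+c(\xx^{(-l)})$ with $c$ not involving $\xx^{(l)}$; therefore, from the Lebesgue density $Z^{-1}\xx^{\aa-1}e^{2V_{\ss,\JJ}(\xx)}$ of $p_{\aa,\ss,\JJ}$,
$$p_{\aa,\ss,\JJ}(\d\xx^{(l)}\mid\xx^{(-l)})\ \propto\ \big(\xx^{(l)}\big)^{\aa^{(l)}-1}e^{2(\xx^{(l)})^T\tilde\ss^{(l)}(\xx^{(-l)})}\,\d\xx^{(l)},$$
which is precisely the tilted Dirichlet distribution that is the reversible measure of the single-locus selective Wright--Fisher diffusion $\A^{(l)}$, by \cite{EG93,BEG00}. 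Consequently $\A^{(l)}$ is symmetric in $L^2\big(\Delta_{K_l},\,p_{\aa,\ss,\JJ}(\cdot\mid\xx^{(-l)})\big)$ for every fixed $\xx^{(-l)}$.

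Finally I would lift this to the joint measure by disintegration. Writing $p_{\aa,\ss,\JJ}(\d\xx)=p_{\aa,\ss,\JJ}(\d\xx^{(l)}\mid\xx^{(-l)})\,p_{\aa,\ss,\JJ}(\d\xx^{(-l)})$ over $\Delta=\Delta_{K_l}\times\Delta^{(-l)}$, with $\Delta^{(-l)}=\times_{r\ne l}\Delta_{K_r}$, and using that $\A^{(l)}$ acts only on the $\xx^{(l)}$ coordinates, for any $f,g\in C^2(\Delta)$ the inner integral over $\xx^{(l)}$ (treating $f,g$ as $\xx^{(-l)}$-parametrised functions in $C^2(\Delta_{K_l})$) is symmetrised by the previous step, and integrating out $\xx^{(-l)}$ gives $\int_\Delta f\,\A^{(l)}g\,\d p_{\aa,\ss,\JJ}=\int_\Delta g\,\A^{(l)}f\,\d p_{\aa,\ss,\JJ}$. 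Summing over $l=1,\dots,L$ yields symmetry of $\A$, hence reversibility. I expect the main point requiring care to be the algebraic identity isolating the $\xx^{(l)}$-linear part of the fitness potential, which is what makes the conditional law genuinely of tilted-Dirichlet type with a tilt $\tilde\ss^{(l)}$ that does not depend on $\xx^{(l)}$; a secondary point is checking that the appeal to single-locus reversibility is legitimate here, the degeneracy of the Wright--Fisher diffusion matrix at the boundary of $\Delta_{K_l}$ being exactly what makes the underlying integration by parts produce no boundary flux, as established in \cite{EG93,BEG00}. An alternative, more computational route would be to verify directly the detailed-balance identity $Z^{-1}\xx^{\aa-1}e^{2V_{\ss,\JJ}(\xx)}\,\A f=\tfrac12\sum_{l}\sum_{i,j}\partial_{x_i^{(l)}}\big(Z^{-1}\xx^{\aa-1}e^{2V_{\ss,\JJ}(\xx)}\,d_{ij}^{(l)}(\xx^{(l)})\,\partial_{x_j^{(l)}}f\big)$ on the interior of $\Delta$, exhibiting $\A$ in divergence form and making $\int_\Delta(\A f)\,g\,\d p_{\aa,\ss,\JJ}$ manifestly symmetric in $f$ and $g$.
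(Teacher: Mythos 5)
Your argument is correct, but it follows a genuinely different route from the paper. The paper treats the selection drift globally as a potential perturbation: it writes $\A=\A_{0}+\sum_{l,i,k}a^{(l)}_{ik}(\partial V_{\ss,\JJ}/\partial x^{(l)}_{k})\,\partial/\partial x^{(l)}_{i}$, uses the known symmetry of the neutral multi-locus generator $\A_{0}$ with respect to the product Dirichlet measure \citep[Lemma 4.1]{EK81}, and invokes the symmetry-preserving transformation result of \cite{FS86} to conclude that adding the drift $D\nabla V_{\ss,\JJ}$ tilts the symmetrizing measure by $e^{2V_{\ss,\JJ}}$, giving \eqref{stationary} in one stroke; this works for essentially any smooth potential $V$. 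You instead exploit the specific block structure of $\JJ$ (symmetry and vanishing diagonal blocks), which makes $V_{\ss,\JJ}$ linear in each $\xx^{(l)}$: freezing $\xx^{(-l)}$ reduces $\A^{(l)}$ to a classical single-locus Wright--Fisher generator with genic selection $\tilde\ss^{(l)}(\xx^{(-l)})$ as in \eqref{indep WF}, whose reversible law is exactly the conditional of \eqref{stationary} given $\xx^{(-l)}$, and disintegration then yields symmetry of each $\A^{(l)}$, hence of $\A$. This buys a more elementary, self-contained argument and the useful by-product that the full conditionals of \eqref{stationary} are tilted Dirichlets (the fact underlying, e.g., Gibbs-type sampling from the stationary law as in \cite{GHK21}), at the price of being tied to the pairwise-interaction form of $V_{\ss,\JJ}$, whereas the paper's perturbation argument is insensitive to that structure. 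Two points deserve care, as you partly anticipate: first, \cite{EG93} and \cite{BEG00} are cited in the paper for \emph{invariance} of the (tilted) Dirichlet, so for the single-locus \emph{reversibility} you should either quote a reference stating it explicitly or give the one-line divergence-form/integration-by-parts verification you sketch at the end (your conditional-density computation already provides the needed Wright-type density, and the degeneracy of $d^{(l)}_{ij}$ on $\partial\Delta_{K_l}$ kills the boundary terms); second, the passage from symmetry of the generator on $C^{2}(\Delta)$ to reversibility of the semigroup is exactly the martingale-problem issue handled by \cite{FS86}, so you are implicitly using the same result the paper cites, just at the last step rather than as the engine of the proof.
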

\begin{proof}
We leverage on a Theorem 4.6 of \cite{FS86}, where some symmetry-preserving transformations of measure and infinitesimal generator are provided. Specifically, by expanding the interaction function $V_{\ss, \JJ}\in C^2(\Delta)$, 
$$
	V_{\ss, \JJ}(\xx) = \xx^T\ss + \frac{1}{2} \xx^T \JJ \xx = \sum_{l=1}^{L}  \sum_{i=1}^{K_{l}} \bigg[x^{(l)}_i \sigma^{(l)}_i +\frac{1}{2} x^{(l)}_i \sum_{\substack{r=1 \\ r \neq l}}^{L} \sum_{j=1}^{K_{r}} J^{(lr)}_{ij} x^{(r)}_j \bigg],
$$
it follows that \eqref{generator} can be written as
\begin{equation}\label{expansion A_0+rest}
\mathcal{A} = \mathcal{A}_{0} + \sum_{l=1}^{L} \sum_{i=1}^{K_l} \bigg[ \sum_{k=1}^{K_l} a^{(l)}_{i k} \left( \xx \right) \frac{\partial}{\partial x^{(l)}_{k}} V_{\ss, \JJ}(\xx) \bigg] \frac{\partial}{\partial x^{(l)}_{i} },
\end{equation} 
where
$$	
a^{(l)}_{i k} = x_{i} (\delta_{i k} - x_{k}),
$$
and
$$	
\frac{\partial}{\partial x^{(l)}_k} V_{\ss, \JJ}(\xx) = \sigma^{(l)}_k +\frac{1}{2} \sum_{\substack{r=1 \\ r \neq l}}^{L} \sum_{j=1}^{K_{r}} J^{(lr)}_{kj} x^{(r)}_j,
$$
and $\mathcal{A}_{0}$ is the infinitesimal generator of the multi-locus WF diffusion without selection. The latter is known to be symmetric with respect to \eqref{dirichlet} \citep[cf.][Lemma 4.1]{EK81}, in the sense that $\int f (\mathcal{A}_{0} g ) \d\pi_{\alpha} = \int g (\mathcal{A}_{0} f ) \d\pi_{\alpha}$ for all $ f,g \in D(\mathcal{A}_{0})$. Then, by virtue of \eqref{expansion A_0+rest}, it follows that $\mathcal{A}$ is symmetric with respect to \eqref{stationary}. 
\end{proof}
The reversibility established above will be needed in Section \ref{sec: filtering} to study filtering and smoothing for c-WF diffusions.


\subsection{Duality}\label{sec:duality}
Given two Markov processes $\{X(t):t\ge0\}$ and $\{M(t):t\ge0\}$ on $\S$ and $\S'$ respectively, we say $X(t)$ is dual to $M(t)$ with respect to a bounded measurable function $h:\S \times \S' \rightarrow \R$ if \begin{equation}\label{dual}
	\mathbb{E}\left[ h(X(t),m)\mid X(0)=x \right] = {\mathbb{E}}\left[ h(x,M(t)) \mid M(0)=m\right] \qquad \forall x\in \S, m \in \S', t>0.
\end{equation}
See \cite{EK86,JK14} for reviews, details and applications. \cite{FHK21} identified a multivariate jump process on $\N^{K}$, which we will define below, as dual to a c-WF diffusion with respect to function
\begin{equation} \label{h}
	h(\xx, \mm) = \frac{1}{k(\mm)}  \xx^{\mm} = \frac{1}{k(\mm)} \prod_{l=1}^{L} \prod_{i=1}^{K_{l}} \left( x_{i}^{(l)} \right)^{m_{i}^{(l)}},
\end{equation}
where, for $\tilde{\XX} \sim p_{\aa, \ss, \JJ}(\d\xx)$, 
$$
k(\mm)= \mathbb{E}\bigg[\prod_{l=1}^{L} \prod_{i=1}^{K_{l}}\big(\tilde{X}_{i}^{(l)}\big)^{m_{i}^{(l)}}\bigg].
$$
Notice also that \eqref{h} is of the same form as the duality function for the single-locus case in \cite{EG09,PR14,GJL19}, since this model is labelled, as opposed, e.g., to \cite{Gea24}.

Let $\mathbf{e}_{i}^{(l)}$ be the standard unit vector in $\R^{K}$ in the $i$-th direction on the $l$-th locus.
In the special case with parent-independent mutations, the dual process is a pure-jump Markov process $\MM:=\{\MM(t),t\ge0\}$, with $\MM(t)\in \N^{K}$, which jumps from $\mm \in \N^{K} \backslash\set{\textbf{0}}$ to 
	\begin{itemize}
		\item $\mm-\mathbf{e}_{i}^{(l)}$ (\emph{coalescence}) for $i=1, \ldots K_{l},\ l=1, \ldots, L$, s.t. $m_{i}^{(l)} \geq 2$, at rate $$
		q\left(\mm, \mm-\mathbf{e}_{i}^{(l)}\right) = \dfrac{m_{i}^{(l)} (m_{i}^{(l)}-1)}{2} \dfrac{k(\mm-\mathbf{e}_{i}^{(l)})}{k(\mm)};
		$$
		\item $\mm-\mathbf{e}_{i}^{(l)}+\mathbf{e}_{j}^{(l)}$ (\emph{mutation}) for $i, j=1, \ldots K_{l},\ l=1, \ldots, L$, s.t. $i \neq j, m_{i}^{(l)} \geq 1$, at rate $$
		q\left(\mm, \mm-\mathbf{e}_{i}^{(l)} + \mathbf{e}_{j}^{(l)}\right) = m_{i}^{(l)} \dfrac{\alpha_{i}^{(l)}}{2} \dfrac{k(\mm-\mathbf{e}_{i}^{(l)} + \mathbf{e}_{j}^{(l)})}{k(\mm)};
		$$
		\item $\mm+\mathbf{e}_{j}^{(l)}$ (\emph{branching}) for $j=1, \ldots K_{l},\ l=1, \ldots, L$, at rate 
		\begin{equation}\label{branching rate}
			q\left(\mm, \mm + \mathbf{e}_{j}^{(l)} \right) = \Bigl(|\mm^{(l)}| \sum_{\substack{k=1 \\ k \neq j}}^{K_{l}} \sigma_{k}^{(l)} + \sum_{\substack{r=1 \\ r \neq l}}^{L} \sum_{i=1}^{K_{r}} m_{i}^{(r)} J_{j i}^{(l r)}\Bigr) \dfrac{k(\mm + \mathbf{e}_{j}^{(l)})}{k(\mm)};
\end{equation} 
		\item $\mm+\mathbf{e}_{j}^{(l)}+\mathbf{e}_{h}^{(r)}$ (\emph{double branching}) for $j=1 \ldots K_{l},\ h=1 \ldots, K_{r},\ l, r=1, \ldots, L,\ r>l$, at rate $$
			q\left(\mm, \mm + \mathbf{e}_{j}^{(l)} + \mathbf{e}_{h}^{(r)}\right) = \Bigl[\left( |\mm^{(l)}| + |\mm^{(r)}|\right) \sum_{\substack{k=1 \\ k \neq j}}^{K_{l}} \sum_{\substack{n=1 \\ n \neq h}}^{K_{r}} J_{k n}^{(l r)}\Bigr] \dfrac{k(\mm + \mathbf{e}_{j}^{(l)} + \mathbf{e}_{h}^{(r)})}{k(\mm)},
		$$
	\end{itemize}
	where $|\mm^{(l)}| =\sum_{i=1}^{K_{l}} m_{i}^{(l)}$. 
See \cite{FHK21} for further details and interpretation.

\section{Filtering coupled Wright--Fisher diffusions}\label{sec: filtering}

We are going to cast c-WF diffusions in a hidden Markov model statistical framework, whose general structure will be recalled shortly, and derive, under certain assumptions on the data collection, the conditional distributions of the diffusion states under different types of conditioning: on past data, leading to the signal \emph{predictive} distribution; on past and present data, leading to the \emph{filtering} distribution; and on the entire dataset, that is including observation that become available at a later time, leading to \emph{marginal smoothing} distributions. 


A hidden Markov model (HMM) is defined by a two-component process $\set{\left(X_{n},Y_{n}\right) : n\ge0}$, where $X_{n}$ is an unobserved Markov chain, often called \emph{latent signal}, while $Y_{n}$ denotes the  observation (or the vector of observations) collected at step $n$. Such a pair is a HMM if the distribution of $Y_{n}$ only depends on the value of $X_{n}$, plus possibly additional global parameters, making $Y_{n}$ conditionally independent of $X_{k},Y_{k}, k\ne n$, given $X_{n}$. See \cite{CMR05} for a general treatment of HMMs. 

Here we consider the signal to be the discrete-time skeleton of a c-WF diffusion, that is, if $\XX$ has generator \eqref{generator}, we let $X_{n}:=\XX(t_{n})$ for $0=t_{0}<t_{1}<\cdots<T$. Note that the intervals between data collection times need not be equal. 
We assume the following specification for the HMM:
\begin{equation}\label{HMM}
\begin{aligned}
\XX \sim&\, \text{c-WF}_{\aa, \ss, \JJ}, \quad \XX(0)\sim p_{\aa, \ss, \JJ}, \\
\YY(t_{n}) | \XX(t_{n}) = \xx &\, \sim 
\text{MN}_{L}(\NN_{n}, \xx)
\end{aligned}
\end{equation}
where $\XX \sim\text{c-WF}_{\aa, \ss, \JJ}$ denotes that $\XX$ is a c-WF diffusion with operator \eqref{generator}, $p_{\aa, \ss, \JJ}$ is the initial distribution chosen as in \eqref{stationary}, 
and
\begin{equation}\nonumber
\text{MN}_{L}(\NN_{n}, \xx)= \prod_{l=1}^{L} \text{MN}(N_n^{(l)}, \xx^{(l)})
\end{equation} 
where
 $\text{MN}(N_n^{(l)}, \xx^{(l)})$ denotes a Multinomial pdf with size $N_n^{(l)}$ and categorical probabilities $\xx^{(l)}=(\xx^{(l)}_{1},\ldots,\xx^{(l)}_{K})$. Here the values in  $\NN_{n}=(N_n^{(1)},\ldots,N_n^{(L)})$ are assumed to be given for each observation time $t_{n}$.

Note now that \eqref{stationary} and \eqref{h} imply 
\begin{equation}\label{conjugacy}
\xx^{\nn} p_{\aa, \ss, \JJ}(\mathrm{d}\xx) \propto h(\xx, \nn) p_{\aa, \ss, \JJ}(\mathrm{d}\xx) = p_{\aa+\nn, \ss, \JJ}(\mathrm{d}\xx),
\end{equation}
for all $\xx \in \Delta$ and $\nn \in \N^{K} \setminus \set{0}$, where
$p_{\aa + \nn, \ss, \JJ}$ defined as in \eqref{stationary} with $\aa$ replaced by $ \aa+\nn$ where $\nn=(\nn^{(1)},\ldots,\nn^{(L)})\in \N^{K} \setminus \set{0}$ and $\nn^{(l)}=(n_{1}^{(l)},\ldots,n_{K_{l}}^{(l)})\in \N^{K_{l}} \setminus \set{0}$.
Then, under \eqref{HMM}, the distribution of $ \XX(t_{0})|\YY(t_{0})$ is $p_{\aa+\nn, \ss, \JJ}(\mathrm{d}\xx)$ with $\nn$ being the vector of multiplicities observed in $\YY(t_{0})$. In general we are interested in the signal predictive distribution, i.e., the law of $\XX(t_{n+1})|\YY(t_{0}),\ldots,\YY(t_{n})$, and in the filtering distribution, i.e., the law of $\XX(t_{n+1})|\YY(t_{0}),\ldots,\YY(t_{n+1})$. These are no longer single kernels as in \eqref{conjugacy}, but can be computed recursively as described  by the next result. 

\begin{proposition}\label{prop:filtering}
Let $(\XX(t_{n}),\YY(t_{n}))_{n\ge0}$ be as in \eqref{HMM}, and let 
\begin{equation}\label{porp_filtering_assumption}
\XX(t_{n})|\YY(t_{0}),\ldots,\YY(t_{n}) \sim \sum_{\mm>\oo} \hat{c}_{n}(\mm)p_{\aa + \mm, \ss, \JJ}(\d\xx), 
\end{equation} 
where $\sum_{\mm>\oo} \hat{c}_{n}(\mm)=1$.
Then 
\begin{equation}\label{signal prediction}
    \XX(t_{n+1})|\YY(t_{0}),\ldots,\YY(t_{n})\sim 
    \sum_{\nn>\oo} 
    c_{n+1}(\nn) p_{\aa + \nn, \ss, \JJ},
\end{equation} 
where $c_{n+1}(\nn)\propto \sum_{\mm>\oo} \hat{c}_{n}(\mm)p_{\mm, \nn}(t_{n+1}-t_{n})$ and $p_{\mm, \nn}(t)$ are the transition probabilities of $\textbf{M}$ as in Section \ref{sec:duality}. 
Furthermore, let $\YY(t_{n+1})$ be the data collected at time $t_{n+1}$, with $\yy\in \N^{K} \setminus \set{0}$ describing the observed multiplicities of types, and let $\text{MN}_{L}(\yy;\NN_{n+1}, \xx)$ be as in \eqref{HMM}, evaluated at $\yy$. Then 
\begin{equation}\nonumber
    \XX(t_{n+1})|\YY(t_{0}),\ldots,\YY(t_{n+1}) \sim 
    \sum_{\nn>\oo}
    \hat{c}_{n+1}(\nn) p_{\aa + \nn+\yy, \ss, \JJ},
\end{equation} 
where $\hat{c}_{n+1}(\nn)\propto c_{n+1}(\nn)d_{n+1}(\mm,\yy)$ and $d_{n+1}(\mm, \yy) := \int_{\Delta} \mathrm{MN}_{L}(\yy;\NN_{n+1}, \xx) p_{\aa + \mm, \ss, \JJ}(\d \xx)$, with $\NN_{n+1}$ the vector of sample sizes collected at time $t_{n+1}$.
\end{proposition}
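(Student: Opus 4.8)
The plan is to establish the three distributional statements in sequence, mirroring the standard "propagate then update" structure of HMM recursions, using duality to handle the propagation step and conjugacy \eqref{conjugacy} to handle the update step. First I would treat the propagation from $\XX(t_n)\mid \YY(t_0),\dots,\YY(t_n)$ to $\XX(t_{n+1})\mid \YY(t_0),\dots,\YY(t_n)$. Writing the assumed mixture \eqref{porp_filtering_assumption} and conditioning on $\XX(t_n)$, the desired law is $\sum_{\mm>\oo}\hat c_n(\mm)\int_\Delta P_{t_{n+1}-t_n}(\xx,\d\yy)\,p_{\aa+\mm,\ss,\JJ}(\d\xx)$, where $P_t$ is the c-WF transition semigroup. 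The key identity is that integrating the transition semigroup against the tilted-Dirichlet kernel $p_{\aa+\mm,\ss,\JJ}$ produces a finite mixture of kernels $p_{\aa+\nn,\ss,\JJ}$ with weights given by the dual transition probabilities $p_{\mm,\nn}(t)$. I would derive this from the duality relation \eqref{dual}–\eqref{h}: because $\XX$ is reversible with respect to $p_{\aa,\ss,\JJ}$ (Proposition \ref{prop:reversibility}), one has, for fixed $\mm$, $\int_\Delta h(\xx,\mm)P_t(\zz,\d\xx)p_{\aa,\ss,\JJ}(\d\zz)\big/(\text{normalization})$ equal to $\sum_{\nn}p_{\mm,\nn}(t)\,h(\xx,\nn)$ weighted appropriately, and then \eqref{conjugacy} converts $h(\xx,\nn)p_{\aa,\ss,\JJ}(\d\xx)$ into $p_{\aa+\nn,\ss,\JJ}(\d\xx)$ up to the constant $k(\nn)$. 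Collecting constants gives $c_{n+1}(\nn)\propto\sum_{\mm>\oo}\hat c_n(\mm)p_{\mm,\nn}(t_{n+1}-t_n)$ and one checks these sum to one since $\sum_{\nn}p_{\mm,\nn}(t)=1$ and $\sum_\mm\hat c_n(\mm)=1$.

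The crucial point to get right, and what I expect to be the main obstacle, is the bookkeeping of normalizing constants in the above argument: the duality function \eqref{h} carries the factor $1/k(\mm)$, the conjugacy \eqref{conjugacy} is stated only up to proportionality, and the reversible-measure manipulation introduces ratios of $k(\cdot)$ values. One must verify that all these $k(\cdot)$ factors, together with the normalizing constants $Z$ hidden in the $p_{\aa+\mm,\ss,\JJ}$, combine so that the final weights depend on $\hat c_n$ and $p_{\mm,\nn}$ exactly as claimed, with the overall proportionality constant being precisely what makes $\{c_{n+1}(\nn)\}$ a probability mass function. The cleanest route is to fix an arbitrary test function $f\in C(\Delta)$, compute $\E[f(\XX(t_{n+1}))\mid \YY(t_0),\dots,\YY(t_n)]$ by expanding $f$ (or it suffices to take $f=h(\cdot,\jj)$ and use that these span a determining class), push the time-$t$ semigroup onto the dual via \eqref{dual}, and read off the mixture representation; the normalization then falls out of requiring the total mass to be $1$.

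For the filtering (update) step, I would apply Bayes' rule: the law of $\XX(t_{n+1})\mid\YY(t_0),\dots,\YY(t_{n+1})$ has density with respect to the predictive law \eqref{signal prediction} proportional to the likelihood $\mathrm{MN}_L(\yy;\NN_{n+1},\xx)$. Multiplying the mixture $\sum_{\nn>\oo}c_{n+1}(\nn)p_{\aa+\nn,\ss,\JJ}(\d\xx)$ by this Multinomial likelihood, the factor $\xx^{\yy}$ combines with each tilted-Dirichlet kernel via \eqref{conjugacy} to give $p_{\aa+\nn+\yy,\ss,\JJ}(\d\xx)$ up to a constant; that constant, after integrating over $\Delta$, is exactly $d_{n+1}(\nn,\yy)=\int_\Delta \mathrm{MN}_L(\yy;\NN_{n+1},\xx)p_{\aa+\nn,\ss,\JJ}(\d\xx)$, the marginal likelihood contribution of mixture component $\nn$. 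Hence the updated weights are $\hat c_{n+1}(\nn)\propto c_{n+1}(\nn)\,d_{n+1}(\nn,\yy)$, renormalized to sum to one, which is the asserted form. (I note the statement writes $d_{n+1}(\mm,\yy)$; with the index matched to the running summation variable this is $d_{n+1}(\nn,\yy)$.) This step is essentially the conjugacy already recorded in \eqref{conjugacy} applied termwise, so the only care needed is again the tracking of the per-component normalizers, which here are named and thus unproblematic.
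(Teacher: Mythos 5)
Your proposal is correct and follows essentially the same route as the paper's proof: reversibility (detailed balance) to flip the semigroup onto the duality function, the duality identity \eqref{dual} to replace the diffusion expectation by an expansion over the dual transition probabilities $p_{\mm,\nn}(t)$, conjugacy \eqref{conjugacy} to recombine each term into $p_{\aa+\nn,\ss,\JJ}$, and Bayes' rule applied componentwise for the update step. The normalizing-constant bookkeeping you flag is handled automatically because the factor $1/k(\mm)$ in \eqref{h} makes $h(\xx,\mm)\,p_{\aa,\ss,\JJ}(\d\xx)=p_{\aa+\mm,\ss,\JJ}(\d\xx)$ hold exactly, and you are right that the statement's $d_{n+1}(\mm,\yy)$ should read $d_{n+1}(\nn,\yy)$.
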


\begin{proof}
By virtue of Proposition \ref{prop:reversibility}, the c-WF diffusion is reversible with respect to \eqref{stationary}. Furthermore, the duality identity \eqref{dual} holds with respect to functions in \eqref{h}. These yield conditions analogous to Assumptions 1 and 3 in Proposition 2.1 in \cite{KKK24}, whose implications are spelled out in the rest of the proof. 
In particular, from these it follows that we can exchange the expectation with respect to the law of $\XX(t)$ with the expectation with respect to the law of $\MM(t)$, and determine the signal predictive distribution for $\XX(t_{n+1})|\YY(t_{0}),\ldots,\YY(t_{n})$ using the dual process transition probabilities, giving rise to a countable mixture of distributions $p_{\aa + \nn, \ss, \JJ}$ with mixture weights $c_{n+1}(\nn) \propto \sum_{\mm>\oo} \hat{c}_{n}(\mm)p_{\mm, \nn}(t_{n+1}-t_{n})$. 
Indeed, by denoting with $P_{t}(\xx, \d\xx')$ the transition semigroup of the c-WF (where the notation $(\xx,\xx')$ is equivalent to $(\xx'|\xx)$), we see that the law of $\XX(t_{n+1})|\YY(t_{0}),\ldots,\YY(t_{n})$ equals
\begin{align}
\int_{\Delta} &\left[ \sum_{\mm>\oo} \hat{c}_{n}(\mm)p_{\aa + \mm, \ss, \JJ}(\d\xx) \right] P_{t_{n+1}-t_{n}}\left(\xx, \d \xx'\right) \\
	& = \sum_{\mm>\oo} \hat{c}_{n}(\mm) \int_{\Delta} h(\xx, \mm) p_{\aa, \ss, \JJ}(\d \xx) P_{t_{n+1}-t_{n}}\left(\xx, \d \xx'\right) .
\end{align}
Proposition \ref{prop:reversibility} now implies the detailed balance condition, which here reads
\begin{equation}\nonumber
p_{\aa, \ss, \JJ}(\d \xx) P_{t_{n+1}-t_{n}}(\xx, \d \xx') 
=p_{\aa, \ss, \JJ}(\d \xx') P_{t_{n+1}-t_{n}}(\xx', \d \xx),
\end{equation} 
from which the expression in the preceding display is equal to
\begin{equation}\nonumber
\begin{aligned}
\sum_{\mm>\oo} &\hat{c}_{n}(\mm) \int_{\Delta}  h(\xx, \mm) p_{\aa, \ss, \JJ}\left(\d \xx'\right) P_{t_{n+1}-t_{n}}\left(\xx', \d \xx\right) \\
    & = \sum_{\mm>\oo} \hat{c}_{n}(\mm) p_{\aa, \ss, \JJ}\left(\d \xx'\right) \int_{\Delta}  h(\xx, \mm) P_{t_{n+1}-t_{n}}\left(\xx', \d \xx\right) \\
	& = \sum_{\mm>\oo} \hat{c}_{n}(\mm) p_{\aa, \ss, \JJ}\left(\d \xx'\right) \mathbb{E}\left[ h(\XX(t_{n+1}),\mm)\mid \XX(t_{n})=\xx' \right].
\end{aligned}
\end{equation} 
Then, from \eqref{dual} with \eqref{h}, the previous equals
\begin{equation}\nonumber
\begin{aligned}
 \sum_{\mm>\oo} &\hat{c}_{n}(\mm) p_{\aa, \ss, \JJ}\left(\d \xx'\right) 
    \mathbb{E}\left[ h(\xx',\MM(t_{n+1}))\mid \MM(t_{n})=\mm \right].
\end{aligned}
\end{equation} 
Writing the expectation as an expansion based on the dual transition probabilities, and rearranging the terms, leads now to
\begin{equation}\nonumber
\begin{aligned}
\sum_{\mm>\oo} &\hat{c}_{n}(\mm) p_{\aa, \ss, \JJ}\left(\d \xx'\right) \left[ \sum_{\nn > \oo} p_{\mm, \nn}(t_{n+1}-t_{n}) h(\xx', \nn) \right]\\
    & = \sum_{\mm>\oo} \sum_{\nn > \oo} \hat{c}_{n}(\mm) p_{\mm, \nn}(t_{n+1}-t_{n}) h(\xx', \nn) p_{\aa, \ss, \JJ}\left(\d \xx'\right) \\
    & = \sum_{\mm>\oo} \sum_{\nn > \oo} \hat{c}_{n}(\mm) p_{\mm, \nn}(t_{n+1}-t_{n})p_{\aa + \nn, \ss, \JJ}\left(\d \xx'\right) \\
    & = \sum_{\nn > \oo} \sum_{\mm>\oo} \hat{c}_{n}(\mm) p_{\mm, \nn}(t_{n+1}-t_{n})p_{\aa + \nn, \ss, \JJ}\left(\d \xx'\right) \\
    & = \sum_{\nn > \oo} c_{n+1}(\nn) p_{\aa + \nn, \ss, \JJ}(\d \xx') ,
\end{aligned}
\end{equation} 
with $c_{n+1}(\nn)\propto \sum_{\mm>\oo} \hat{c}_{n}(\mm)p_{\mm, \nn}(t_{n+1}-t_{n})$, where we have also used \eqref{conjugacy}.

Clearly, \eqref{stationary} also implies $\xx^{\yy}  p_{\aa+\nn, \ss, \JJ}(\mathrm{d}\xx) \propto p_{\aa+\nn+\yy, \ss, \JJ}(\mathrm{d}\xx)$. Furthermore, notice that the posterior distribution of a mixture is a mixture of posteriors whose weights are also conditioned on the observations, namely
\begin{equation}\nonumber
Z\sim \sum_{i}q(i)p_{i}(z)
\quad \Rightarrow \quad 
Z|Y=y\sim \sum_{i}\frac{q(i)p(y|i)}{\sum_{j}q(j)p(y|j)}p_{i}(z|y)
\end{equation} 
given the first is the same as $Z|i\sim p_{i}$ with probability $q(i)$, hence $Z|i,y\sim p_{i}(z|y)$ with probability $q(i|y)$, which is proportional to $q(i)p(y|i)$ by Bayes's theorem. Hence, we conclude that the filtering distribution for $\XX(t_{n+1})|\YY(t_{0}),\ldots,\YY(t_{n+1})$ is a countable mixture of distributions $p_{\aa + \nn+\yy, \ss, \JJ}$ with mixture weights $\hat{c}_{n+1}(\nn) \propto  c_{n+1}(\nn) d_{n}(\mm, \yy)$ as given in the statement.

\end{proof}

Hence, under \eqref{HMM}, the filtering and signal predictive distributions are all identified as countable mixtures of kernels in the same family as \eqref{stationary}. These findings fall in the family of filtering distributions described in \cite{CG06} (cf.~eq.(2)), which evolve within the set of countable  mixtures of specified parametric kernels.
Note however that in the present setting the specific form of the semigroup of the c-WF is unknown, and therefore the sufficient conditions there specified cannot be checked. The same conclusions can nonetheless be drawn here using a duality-based approach. 

The mixture weights in Proposition \ref{prop:filtering} depend on the dual process transitional probabilities. We can therefore associate  each mixture component to a node of $L$ coupled $K_{l}$-dimensional directed graphs, as illustrated in Figure \ref{fig: loci}. With $L=2$ loci and $K_1=K_2=2$ types at each locus, the $4$-dimensional dual process $\MM$ can be though of as a joint jump process on paired 2-dimensional graphs. The potential jumps are indicated by arrows in the graphs, and are determined by the transition rates in Section \ref{sec:duality}. For example, a coalescence event determines a downward jump to a neighbour node in direction $i$. Due to the dependence among the $L$ components of $\XX$, the dynamics on each graph are not independent, and the two graphs could be seen as single $4$-dimensional graph. For example, the red nodes in Figure \ref{fig: loci} corresponds to the state $\mm = (2,0,1,1)$ for the dual, and the probability of jumping upward in each graph depends on both states (cf., e.g., \eqref{branching rate}).  
\begin{figure}[ht!]
	\centerline{
	\iffigures\includegraphics[width=.9\textwidth]{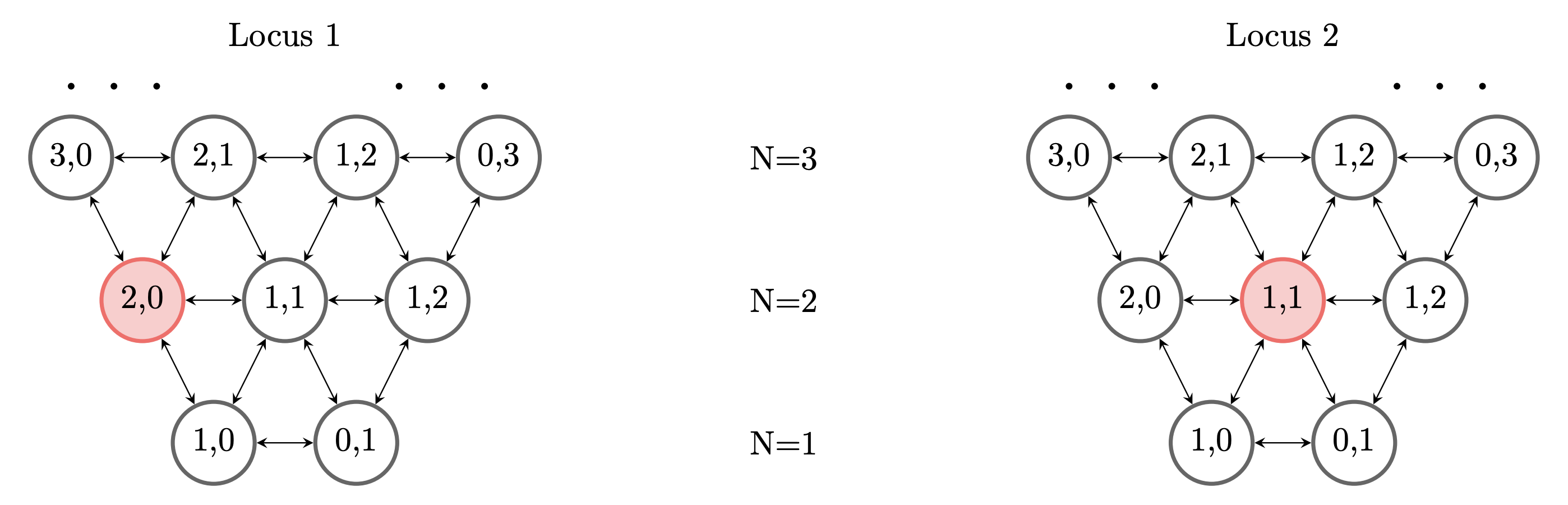}
	\else 
	\text{\red{decomment \emph{figurestrue} in preamble}} \fi
	}
	\caption{Graphical representation of the nodes identifying the states of the dual and the mixture components of the filtering distributions.}\label{fig: loci}
\end{figure}

Note that, at least in principle, one could extend the strategy laid out in \cite{BEG00} for exploiting duality in order to identify an expansion for the diffusion transition function. This goal however entails a non trivial adaptation of their methodology and is left for future work.

Next we turn to the marginal smoothing problem. When further observations become available at a later time, sometimes it is useful to improve previous estimates with the additional data. This procedure typically results in smoother estimates, whence the name.
Determining the smoothing distribution at time $t_{i}$ requires in general integrating out the signal trajectory at all other times, conditional on the entire dataset. This calculation is typically unfeasible, but duality provides an efficient way of identifying the quantities of interest.

\begin{proposition}\label{prop:smoothing}
For $0 \le i \le n-1$, the marginal smoothing density of $\XX(t_{i})$ given $\YY(t_{0}),\ldots,\YY(t_{n})$ admits representation
\begin{equation}
    \XX(t_{i})|\YY(t_{0}),\ldots,\YY(t_{n}) \sim     
    \sum_{\mm,\nn>\oo}
    w_{i}(\mm,\nn) p_{\aa+\mm+\nn, \ss, \JJ}(\d\xx),
\end{equation}
where, for $\hat{c}_{i}(\nn)$ as in Proposition \ref{prop:filtering}, 
\begin{align}
    & w_{i}(\mm,\nn) \propto C_{\mm, \nn} \omega_{i+1}(\mm) \hat{c}_{i}(\nn), \quad \quad C_{\mm,\nn}= \frac{k(\mm)k(\nn)}{k(\mm+\nn)},\\
    & \omega_{i+1}(\mm) = \sum_{\nn >\oo} \omega_{i+2}(\nn) d_{i+1}(\nn, \yy(t_{i+1}))p_{\nn + \yy(t_{i+1}), \mm}(t_{i+1}-t_{i}),
\end{align}
with $d_{i+1}(\cdot,\cdot)$ and $p_{\cdot,\cdot}(t)$ as in Proposition \ref{prop:filtering}.
\end{proposition}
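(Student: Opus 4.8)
The plan is to combine the forward filtering recursion of Proposition~\ref{prop:filtering} with a backward information filter, and to glue the two mixtures together using the duality function \eqref{h}. First I would recall the standard decomposition of the smoothing density: for $0 \le i \le n-1$,
\begin{equation}\nonumber
\mathrm{law}\big(\XX(t_{i}) \mid \YY(t_{0}),\ldots,\YY(t_{n})\big)(\d\xx) \;\propto\; \mathrm{law}\big(\XX(t_{i}) \mid \YY(t_{0}),\ldots,\YY(t_{i})\big)(\d\xx)\cdot \ell_{i}(\xx),
\end{equation}
where $\ell_{i}(\xx) = p\big(\YY(t_{i+1}),\ldots,\YY(t_{n}) \mid \XX(t_{i})=\xx\big)$ is the likelihood of the future data given the current signal state. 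The first factor is the filtering distribution $\sum_{\nn>\oo}\hat{c}_{i}(\nn)\,p_{\aa+\nn,\ss,\JJ}(\d\xx)$ from Proposition~\ref{prop:filtering}. The crux is to show that $\ell_{i}(\xx)$ is, up to a constant, a countable mixture of the duality functions $h(\xx,\mm)$, say $\ell_{i}(\xx) \propto \sum_{\mm>\oo}\omega_{i+1}(\mm)\,h(\xx,\mm)$, with weights satisfying the stated backward recursion; then multiplying the two mixtures and invoking the pointwise identity $h(\xx,\mm)\,p_{\aa+\nn,\ss,\JJ}(\d\xx) \propto C_{\mm,\nn}\,p_{\aa+\mm+\nn,\ss,\JJ}(\d\xx)$ — which follows from \eqref{stationary} and \eqref{h}, with $C_{\mm,\nn}=k(\mm)k(\nn)/k(\mm+\nn)$ the ratio of normalising constants — yields the claimed double mixture with weights $w_{i}(\mm,\nn)\propto C_{\mm,\nn}\,\omega_{i+1}(\mm)\,\hat{c}_{i}(\nn)$.

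The backward recursion for $\omega$ is where duality does the work. Starting from the terminal condition at time $t_{n}$ (where the ``future likelihood'' is trivial, so $\omega_{n+1}$ is a point mass on $\oo$), I would propagate backward through two operations mirroring those in the forward pass. First, absorbing the observation $\YY(t_{i+1})=\yy(t_{i+1})$ multiplies the relevant density by $\xx^{\yy(t_{i+1})}$, whose integral against $p_{\aa+\nn,\ss,\JJ}$ contributes the factor $d_{i+1}(\nn,\yy(t_{i+1}))$ exactly as in Proposition~\ref{prop:filtering}, and shifts the index $\nn\mapsto \nn+\yy(t_{i+1})$. Second, propagating the likelihood backward over the time interval $[t_{i},t_{i+1}]$ requires computing $\E[h(\XX(t_{i+1}),\nn)\mid \XX(t_{i})=\xx]$, and here I invoke the duality identity \eqref{dual} with function \eqref{h} to replace this by $\E[h(\xx,\MM(t_{i+1}))\mid\MM(t_{i})=\nn] = \sum_{\mm>\oo}p_{\nn,\mm}(t_{i+1}-t_{i})\,h(\xx,\mm)$, which turns the integral over the diffusion semigroup into a sum over dual transition probabilities and shifts the index via $p_{\nn+\yy(t_{i+1}),\mm}(t_{i+1}-t_{i})$. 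Composing these two steps gives precisely $\omega_{i+1}(\mm) = \sum_{\nn>\oo}\omega_{i+2}(\nn)\,d_{i+1}(\nn,\yy(t_{i+1}))\,p_{\nn+\yy(t_{i+1}),\mm}(t_{i+1}-t_{i})$.

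I expect the main obstacle to be the bookkeeping needed to justify rigorously that $\ell_{i}(\xx)$ stays within the class of finite-or-countable mixtures of $h(\cdot,\mm)$ at every backward step, i.e.\ verifying by induction that the backward-propagated object is genuinely of the asserted form before multiplying by the filtering mixture — in particular interchanging the (infinite) sums with the integrals against $P_{t}$ and with the multinomial likelihood, and tracking the normalising constants $k(\cdot)$ through each index shift so that the $C_{\mm,\nn}$ factor emerges with the stated form rather than some unsimplified ratio. This is essentially the smoothing counterpart of Proposition~2.2 (or the smoothing result) of \cite{KKK24}; once the reversibility of Proposition~\ref{prop:reversibility} and the duality of Section~\ref{sec:duality} are in hand, the argument is a structured but careful repetition of the manipulations already carried out in the proof of Proposition~\ref{prop:filtering}, run backward in time and then matched against the forward filter at time $t_{i}$.
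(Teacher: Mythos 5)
Your proposal is correct and follows essentially the same route as the paper's own proof: the same Bayes factorisation of the smoothing law into the filtering distribution times the future-data likelihood, the same backward induction showing that $p(\YY(t_{i+1}),\ldots,\YY(t_{n})\mid \XX(t_{i})=\xx)$ is a countable mixture $\sum_{\mm>\oo}\omega_{i+1}(\mm)h(\xx,\mm)$ — absorbing each observation through $d_{i+1}(\nn,\yy(t_{i+1}))$ with the index shift $\nn\mapsto\nn+\yy(t_{i+1})$ and then invoking the duality identity \eqref{dual} to expand over the dual transition probabilities — and the same final merging of the two mixtures through the $C_{\mm,\nn}$ constant. The only cosmetic differences are that you fold $h(\xx,\mm)$ directly into $p_{\aa+\nn,\ss,\JJ}$ rather than using $h(\xx,\mm)h(\xx,\nn)=C_{\mm,\nn}h(\xx,\mm+\nn)$ followed by \eqref{conjugacy} as the paper does, and that your terminal condition ($\omega_{n+1}$ a point mass at $\oo$) is an equivalent packaging of the paper's explicit base case at $i=n-1$.
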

\begin{proof}
Bayes' Theorem and conditional independence allow to write, for every $0 \le i \le n-1$,
\begin{equation}\nonumber
    p(\XX(t_{i})|\YY(t_{0}),\ldots,\YY(t_{n}) )
    \propto p( \YY(t_{i+1}),\ldots,\YY(t_{n}) | \XX(t_{i}) )p( \XX(t_{i})|\YY(t_{0}),\ldots,\YY(t_{i}))
\end{equation}
where the last factor is given in Proposition \ref{prop:filtering}. It can be now shown by induction that
\begin{equation}\label{cost-to-go}
    \YY(t_{i+1}),\ldots,\YY(t_{n}) | \XX(t_{i}) \sim \sum_{\mm > \oo} \omega_{i+1}(\mm) h(\xx, \mm).
\end{equation}
Indeed, \eqref{cost-to-go} holds for $i=n-1$ since \eqref{conjugacy} implies
\begin{equation}\nonumber
   \text{MN}_{L}(\yy; \NN_{n}, \xx) =  h(\xx, \yy) d_{n}(\oo, \yy),
\end{equation}
and $\mathbb{E}\left[ h(\XX(t_{n}),\mm)\mid \XX(t_{n-1})=\xx' \right]$ can be written in terms of the dual transition probabilities, as done in the proof of Proposition \ref{prop:filtering}. Assume then \eqref{cost-to-go} holds for $i+1$, and note that $ \YY(t_{i+1}),\ldots,$ $\YY(t_{n}) | \XX(t_{i}) $ has law
\begin{equation}\nonumber
\begin{aligned}
 \int_{\Delta}
    & \text{MN}_{L}(\yy(t_{i+1}); \NN_{i+1}, \xx(t_{i+1})) p(\yy(t_{i+2}),\ldots,\yy(t_{n})| \xx(t_{i+1}))P_{t_{i+1}-t_{i}} \left(\xx(t_{i}), \d \xx(t_{i+1}) \right).
\end{aligned}
\end{equation}
Since now $h(\xx, \mm) \text{MN}_{L}(\yy; \NN_{n}, \xx) =  h(\xx, \mm + \yy) d_{n}(\mm, \yy)$
holds for all $\xx \in \Delta$ and $\mm \in \N^{K}\setminus \set{0}$, \eqref{cost-to-go} follows from the expansion of the conditional expectation with respect to the dual transition probabilities, upon rearranging the terms. 
Furthermore, $h$ in \eqref{h} satisfies
\begin{equation}
    h(\xx, \mm) h(\xx, \nn) = \frac{k(\mm)k(\nn)}{k(\mm+\nn)} h(\xx, \mm+\nn),
\end{equation}
for all $\xx \in \Delta$, $\mm,\nn \in \N^{K}\setminus \set{0}$. Hence the law of $ \XX(t_{i})|\YY(t_{0}),\ldots,\YY(t_{n}) $ is proportional to
\begin{equation}\nonumber
\begin{aligned}
 \sum_{\mm > \oo}& \omega_{i+1}(\mm) h(\xx, \mm) \sum_{\nn > \oo} \hat{c}_{i}(\nn) h(\xx, \nn) p_{\aa, \ss, \JJ}(\d \xx) \\
    & = \sum_{\mm > \oo} \sum_{\nn > \oo} \omega_{i+1}(\mm) \hat{c}_{i}(\nn) C_{\mm, \nn} h(\xx, \mm + \nn) p_{\aa, \ss, \JJ}(\d \xx) \\
    & = \sum_{\mm > \oo} \sum_{\nn > \oo} \omega_{i+1}(\mm) \hat{c}_{i}(\nn) C_{\mm, \nn} p_{\aa + \mm + \nn, \ss, \JJ}(\d \xx),
\end{aligned}
\end{equation}
which, upon normalization, leads to the claim.
\end{proof}

\section{Implementation and illustration}\label{sec:illustration}

Propositions \ref{prop:filtering} and \ref{prop:smoothing} provide recursion for reweighing the mixture components in order to evaluate the filtering and marginal smoothing distributions given the available data. Algorithm \ref{alg1} lays out the pseudocode for calculating the filtering mixtures as in Proposition \ref{prop:filtering}, while Algorithm \ref{alg2} does the same for the smoothing mixtures as in Proposition \ref{prop:smoothing}.

\begin{algorithm}[ht!]
\footnotesize
\vspace{2mm}
\hspace{-5mm} \KwData{$\yy(j) = (\yy^{(1)}(t_{j}), \ldots, \yy^{(L)}(t_{j}))$ and $t_j$ for $j=0,1,\ldots n$}
\hspace{-5mm} Set parameters $\aa, \ss$ and $\JJ$ for $p_{\aa, \ss, \JJ}(\d\xx)$\\[0mm]
\SetKwBlock{Begin}{Initialise}{end}
\Begin{
    \SetKwBlock{Begin}{For $\mm > \oo$}{end}
    \Begin{
        Compute $c_0(\mm) = p_{\yy(t_0), \mm}(t_1-t_{0})$
    }
}

\SetKwBlock{Begin}{For $j=1,\ldots,n-1$}{end}
\Begin{
    \SetKwBlock{Begin}{Update}{end}
    \Begin{
        Get $\yy(t_{j})$\\
        \SetKwBlock{Begin}{For $\mm > \oo$}{end}
        \Begin{
            Compute $d_{j}(\mm, \yy(t_{j}))$\\
            $a(\mm) \leftarrow c_{j-1}(\mm) \cdot d_{j}(\mm, \yy(t_{j}))$
        }
        \SetKwBlock{Begin}{For $\mm > \oo$}{end}
        \Begin{
            $\hat{c}_{j}(\mm) \leftarrow \frac{a(\mm)}{\sum_{\mm > \oo} a(\mm)}$
        }
    }
    
    \SetKwBlock{Begin}{Prediction}{end}
    \Begin{
        \SetKwBlock{Begin}{For $\nn > \oo$}{end}
        \Begin{
            $c_j(\nn) \leftarrow \sum_{\mm > \oo}  \hat{c}_{j}(\mm) \cdot p_{\mm + \yy(t_{j}), \nn}( t_{j+1}-t_j)$
        }
    }
}

\SetKwBlock{Begin}{Last update}{end}
\Begin{
    Get $\yy(t_{n})$\\
    \SetKwBlock{Begin}{For $\mm > \oo$}{end}
    \Begin{
        Compute $d_{n}(\mm, \yy(t_{n}))$\\
        $a(\mm) \leftarrow c_{n-1}(\mm) \cdot d_{n}(\mm, \yy(t_{n}))$
    }
    \SetKwBlock{Begin}{For $\mm > \oo$}{end}
    \Begin{
        $\hat{c}_{n}(\mm) \leftarrow \frac{a(\mm)}{\sum_{\mm > \oo} a(\mm)}$
    }
}

\textbf{Return} $\hat{c}_{n}(\mm)$
\caption{Evaluation of the mixture weights $\hat{c}_{n}(\mm)$ in Proposition \ref{prop:filtering}} \label{alg1}
\end{algorithm}

\begin{algorithm}[ht!]
\footnotesize
\vspace{2mm}
\hspace{-5mm} \KwData{$\yy(j) = (\yy^{(1)}(t_{j}), \ldots, \yy^{(L)}(t_{j}))$ and $t_j$ for $j=0,1,\ldots,n$}
\hspace{-5mm} Set parameters $\aa, \ss$ and $\JJ$ for $p_{\aa, \ss, \JJ}(\d\xx)$\\[0mm]
\SetKwBlock{Begin}{Initialise}{end}
\Begin{
    \SetKwBlock{Begin}{For $\mm > \oo$}{end}
    \Begin{
        Obtain $\hat{c}_{j}(\mm)$ for $j=0,1,\ldots n$ from Algorithm \ref{alg1}\\
        Compute $c^{*}_{n-1}(\mm) = p_{\yy(n), \mm}(t_{n+1}-t_{n})$
    }
}

\SetKwBlock{Begin}{For $j=n-1,\ldots,1$}{end}
\Begin{
    \SetKwBlock{Begin}{Update}{end}
    \Begin{
        Get $\yy(t_{j})$\\
        \SetKwBlock{Begin}{For $\mm > \oo$}{end}
        \Begin{
            Compute $d_{j}(\mm, \yy(t_{j}))$\\
            $\omega_{j+1}(\mm) \leftarrow c^{*}_{j}(\mm) \cdot d_{j}(\mm, \yy(t_{j}))$
        }
    }
    
    \SetKwBlock{Begin}{Prediction}{end}
    \Begin{
        \SetKwBlock{Begin}{For $\nn > \oo$}{end}
        \Begin{
            $c^{*}_{j-1}(\nn) \leftarrow \sum_{\mm > \oo}  \hat{c}_{j}(\mm) \cdot p_{\mm + \yy(j-1), \nn}( t_{j}-t_{j-1})$
        }
    }
}

\SetKwBlock{Begin}{Last update}{end}
\Begin{
        Get $\yy(0)$\\
        \SetKwBlock{Begin}{For $\mm > \oo$}{end}
        \Begin{
            Compute $d_{0}(\mm, \yy(t_{0}))$\\
            $\omega_{1}(\mm) \leftarrow c^{*}_{0}(\mm) \cdot d_{0}(\mm, \yy(t_{0}))$
        }
    }

\SetKwBlock{Begin}{For $j=0, \ldots, n-1$}{end}
\Begin{
    \SetKwBlock{Begin}{For $\mm, \nn > \oo$}{end}
    \Begin{
        $a(\mm, \nn) \leftarrow C_{\mm, \nn} \omega_{j+1}(\nn) \hat{c}_{j}(\mm)$
    }
    \SetKwBlock{Begin}{For $\mm, \nn > \oo$}{end}
    \Begin{
        $w_{j}(\mm,\nn) \leftarrow \frac{a(\mm, \nn)}{\sum_{\mm,\nn > \oo} a(\mm, \nn)}$
    }
}

\textbf{Return} $w_{j}(\mm,\nn)$ for $j=0, \ldots, n-1$
\caption{Evaluation of the mixture weights $w_{i}(\mm,\nn)$ in Proposition \ref{prop:smoothing}} \label{alg2}
\end{algorithm}

The concrete implementation of these algorithm must however deal with two key hurdles. The first is the fact that the normalising constant in \eqref{stationary} and the constant $k(\mm)$ in \eqref{h} involve integrals which are not available in closed form. \cite{AEK19} and \cite{FHK21} derive a series representation of the normalising constant $Z$ when $\ss = 0$ by involving the Kummer (confluent hypergeometric) function. In general, these integrals need to be approximated with appropriate methods. This hurdle carries over to the evaluation of the marginal distributions $d_{n}(\mm, \yy)$ (cf.~Propositions \ref{prop:filtering}-\ref{prop:smoothing}), as one can check that 
\begin{equation}
    d_{n}(\mm, \yy)  = \prod_{l=1}^L \left( \dfrac{N^{(l)}_{n}!}{\prod_{i=1}^{K_l} y_i^{(l)}!} \right) \frac{\tilde C(\mm+\yy)}{\tilde C(\mm)}
\end{equation}
where
\begin{equation}
    \tilde C(\mm) := \int_{\Delta} \xx^{\mm + \aa - 1} e^{2  V(\xx; \ss, \JJ)} \d\xx
\end{equation}
is in general not available.
Here we adopt a naive Monte Carlo sampler, as Monte Carlo approximations typically outperform quadrature methods in high dimensions, particularly in terms of computational efficiency and ease of implementation, at the cost of higher variability in accuracy. Concretely, $Z$ in \eqref{stationary} is approximated by drawing $B$ \emph{iid} samples $\xx^{(b)}$ from $\pi_{\aa}$ and computing $B^{-1}\sum_{b=1}^{B}e^{2 V_{\ss, \JJ}(\xx^{(b)})}$. The other needed functionals can be computed similarly. 

The second difficulty arises in handling the dual process $\MM$, as no closed formula for its transition probabilities is available. 
Here we adopt the strategy used in \cite{KKK24}, whereby the transition probabilities are approximated by means of several simulated trajectories of the dual. More precisely, one runs multiple times the jump process starting in $\mm$ over the needed time interval, and the transition probability $p_{\mm,\nn}(\Delta t)$ is then approximated by the empirical distribution over the arrival points $\nn$. This procedure is repeated for every starting point $\mm$ with positive mass in the last available mixture, and then the probability masses over the arrival points are aggregated appropriately. The simulation of each run of the dual process is conducted using the Gillespie algorithm \citep{G01}, which alternates drawing exponential holding times and jumps of the embedded chain until the time interval is exhausted. This strategy, besides being easy to implement, has the additional advantage of automatically reducing the cardinality of the transition probability support, which is countable, to a finite set. This corresponds to the support of the empirical distribution obtained as explained above, where the Monte Carlo runs of the dual process place probability mass in the most important part of the state space, in accordance to the dual process dynamics. 

Additionally, we adopt a pruning strategy, which was discussed in \cite{KKK21}. This consists in retaining only the points of the above simulation which are assigned probability mass above a desired threshold, or the points with decreasingly ranked probability mass which collectively provide mass over a desired threshold. Its application is straightforward and, despite not being necessary, makes the implementation more efficient. 



To illustrate, we compute the filtering distributions for a two-locus two-allele setting, i.e., when $L=K_{1}=K_{2}=2$, as in Figure \ref{fig: loci}. We assume pairwise interactions occur according to the rate matrix
\begin{equation}
\JJ = 
\begin{pmatrix}
  0 & 0 & J_{1} & 0 \\
  0 & 0 & 0 & J_{2} \\
  J_{1} & 0 & 0 & 0 \\
  0 & J_{2} & 0 & 0 \\
\end{pmatrix}
\end{equation}
and within-locus selection is determined by $\ss$.
We simulate a c-WF trajectory with $\aa=(1.8,1.4,1.9,$ $1.7)$, $\ss=(0.5,0,0,1.2)$, $J_1=0.9$, and $J_2=1.8$, by using an approximating Markov chain as in \cite{AEK19}, leveraging on their scaling limit.

\begin{figure}[h!]
    \centering
    \begin{subfigure}[t]{0.28\textwidth}
        \iffigures\includegraphics[width=\linewidth]{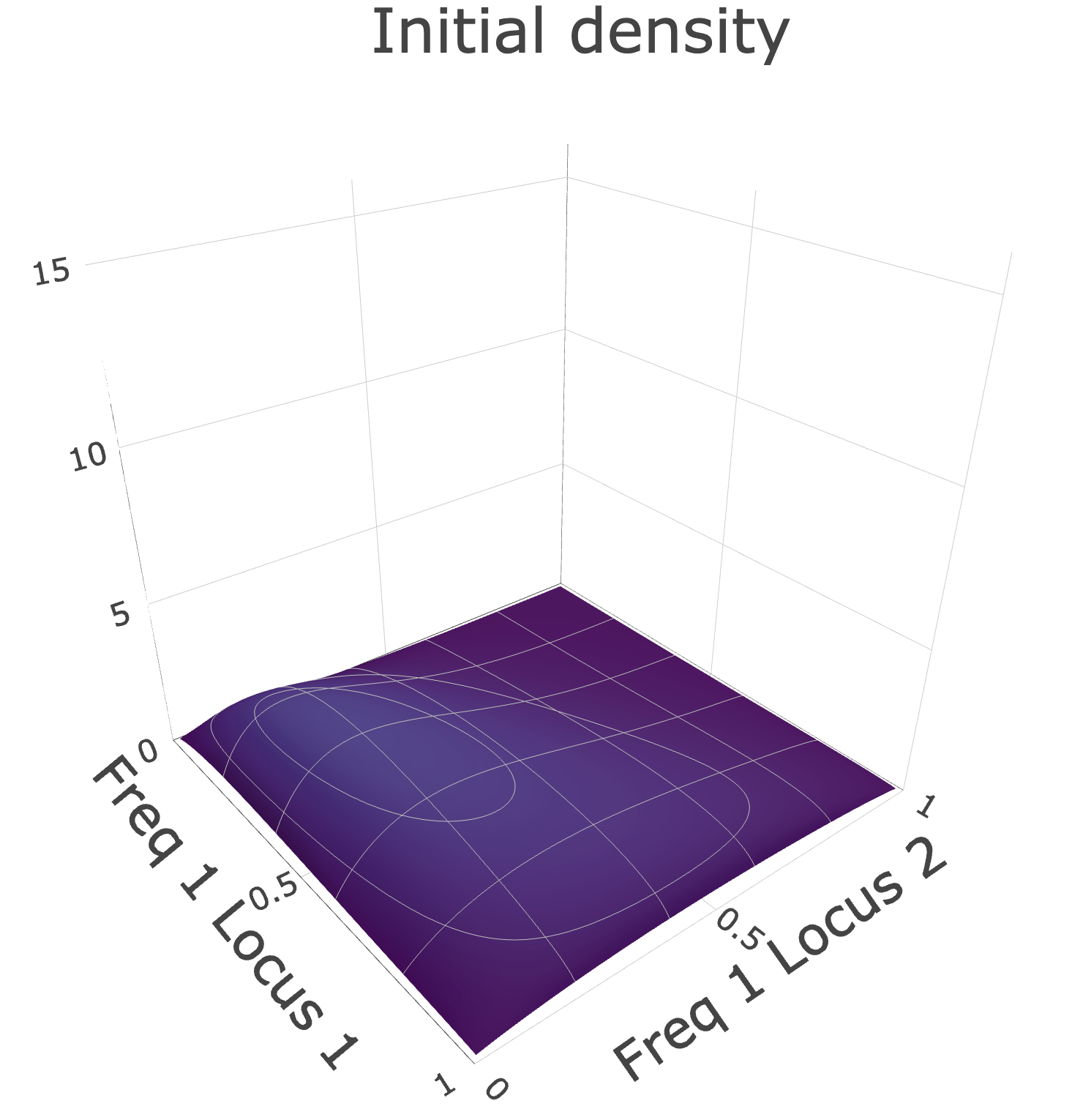}
	\else 
	\text{\red{decomment \emph{figurestrue} in preamble}} \fi
    \end{subfigure}
    \hspace{1.5cm}
    \begin{subfigure}[t]{0.28\textwidth}
        \iffigures\includegraphics[width=\linewidth]{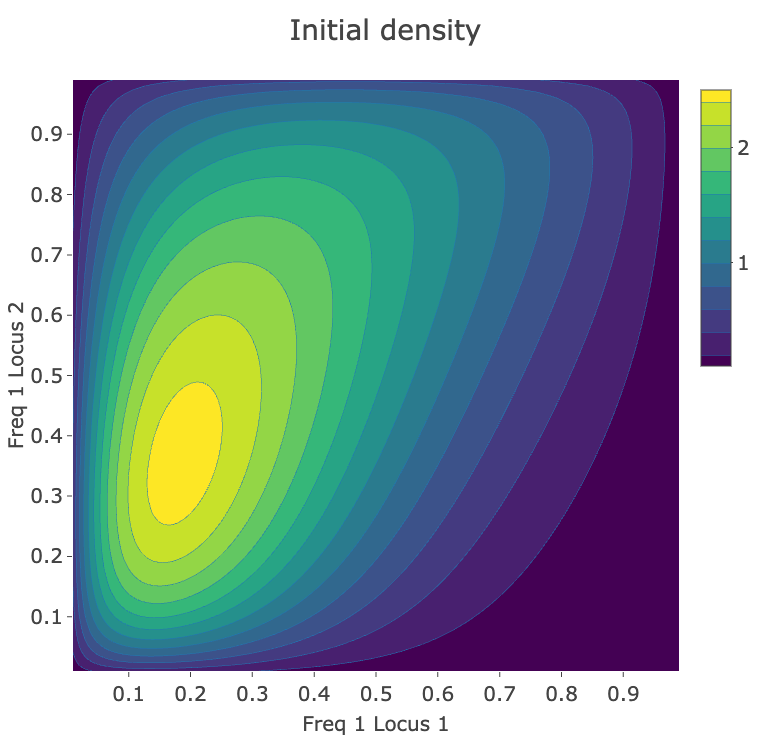}
    \else 
	\text{\red{decomment \emph{figurestrue} in preamble}} \fi
    \end{subfigure}
\vspace{0.2cm}

    \centering
    \begin{subfigure}[t]{0.28\textwidth}
        \iffigures\includegraphics[width=\linewidth]{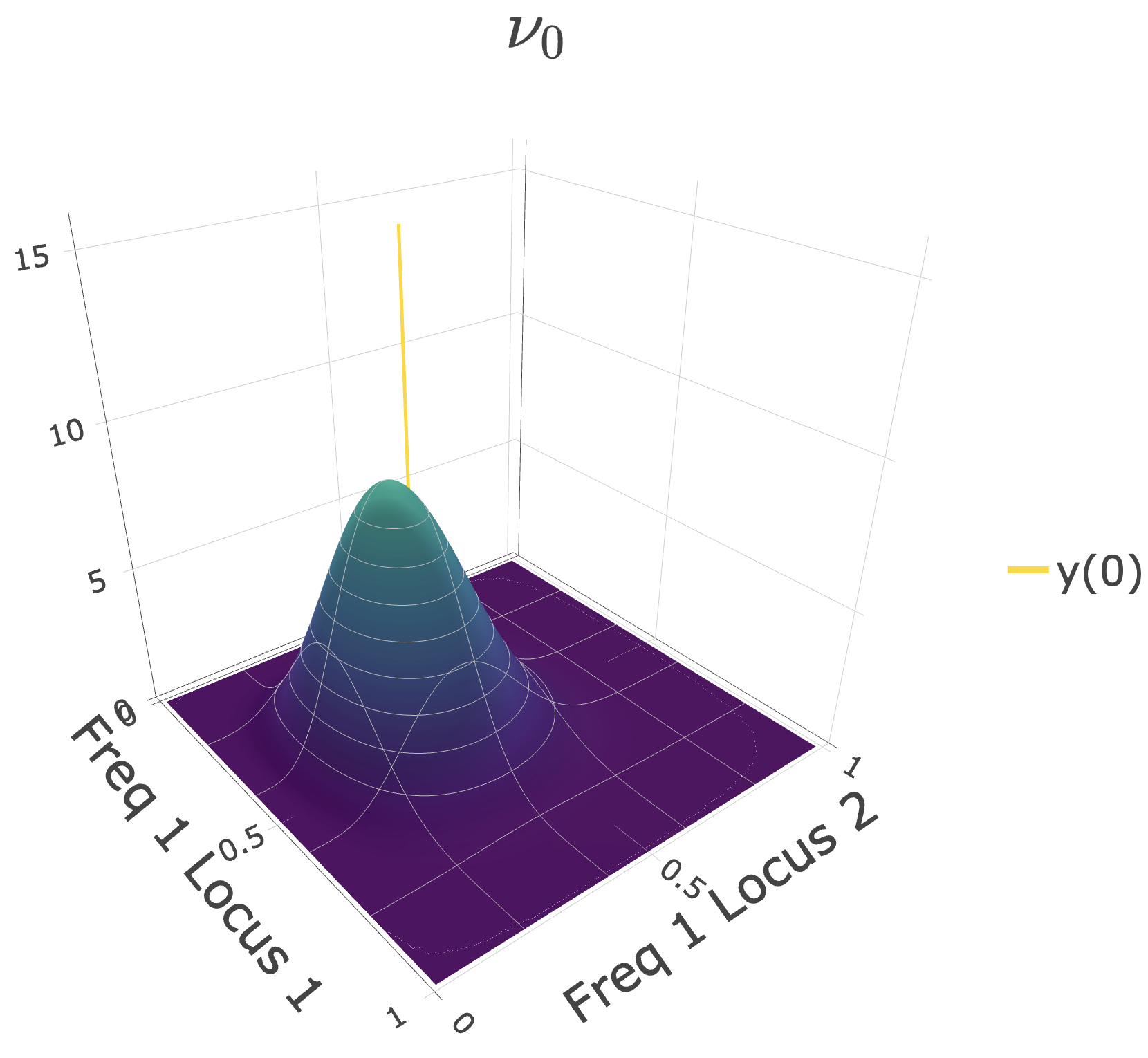}
	\else 
	\text{\red{decomment \emph{figurestrue} in preamble}} \fi
    \end{subfigure}
    \hspace{1.5cm}
    \begin{subfigure}[t]{0.28\textwidth}
        \iffigures\includegraphics[width=\linewidth]{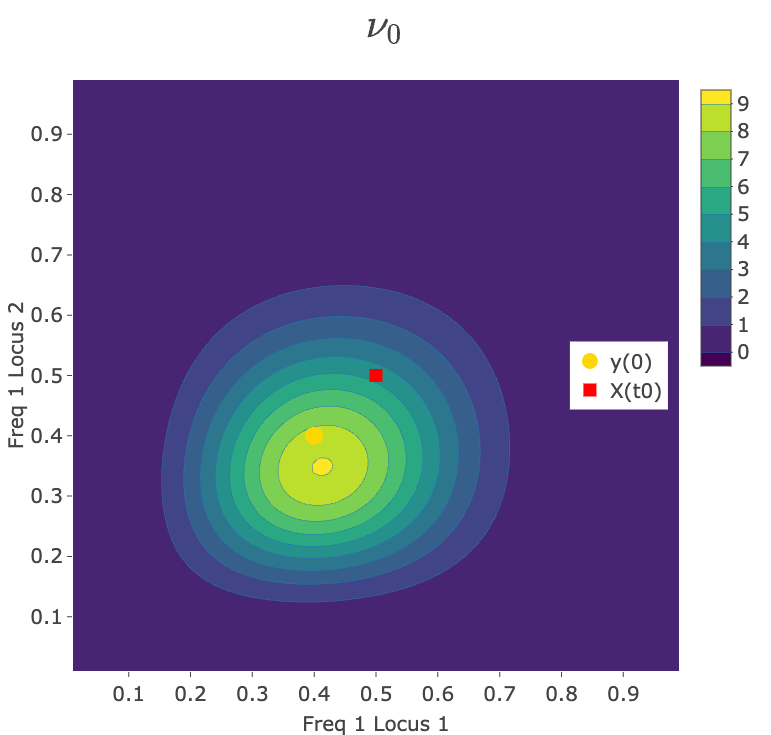}
    \else 
	\text{\red{decomment \emph{figurestrue} in preamble}} \fi
    \end{subfigure}
\vspace{0.2cm}

    \centering
    \begin{subfigure}[t]{0.28\textwidth}
        \iffigures\includegraphics[width=\linewidth]{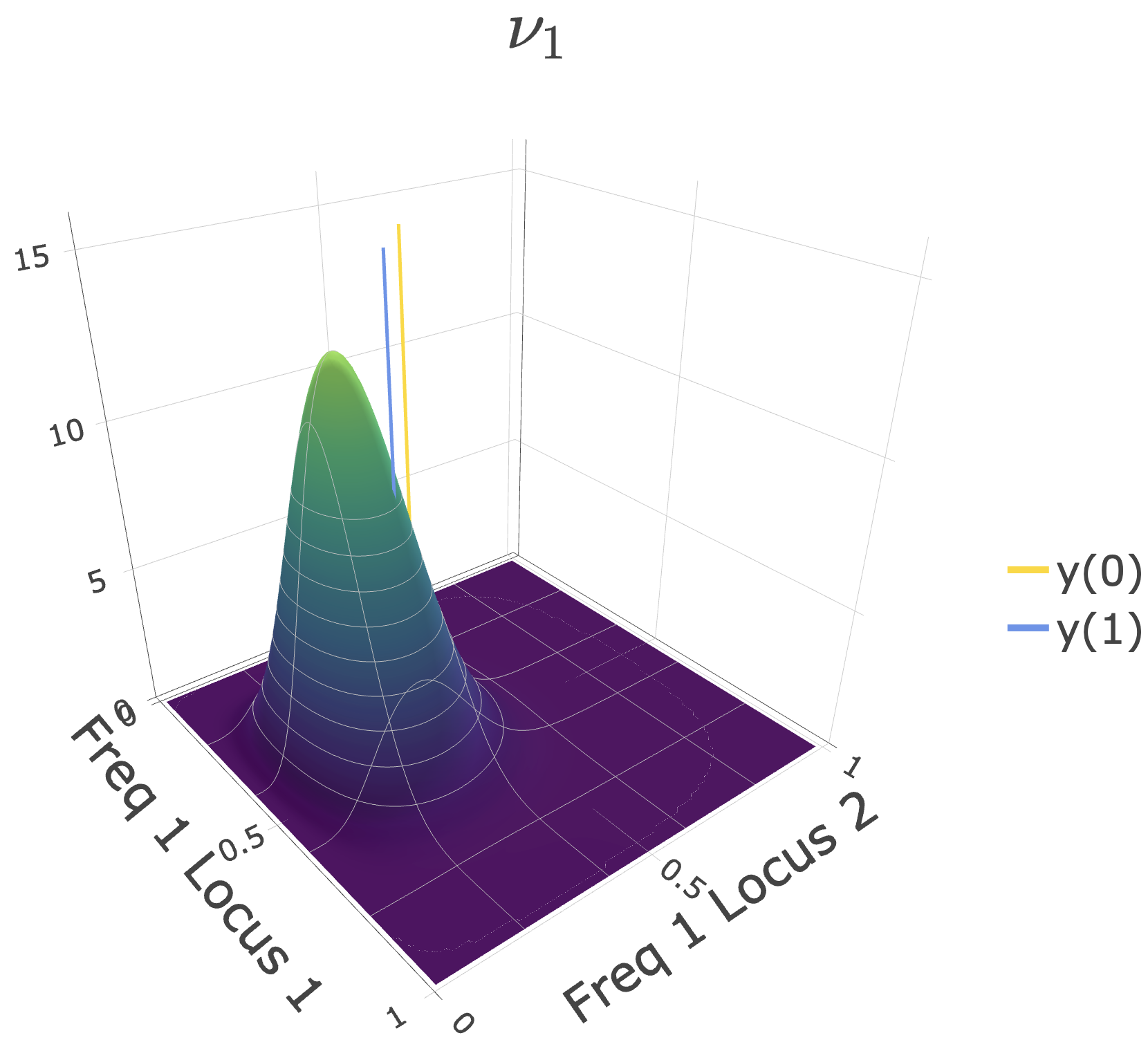}
	\else 
	\text{\red{decomment \emph{figurestrue} in preamble}} \fi
    \end{subfigure}
    \hspace{1.5cm}
    \begin{subfigure}[t]{0.28\textwidth}
        \iffigures\includegraphics[width=\linewidth]{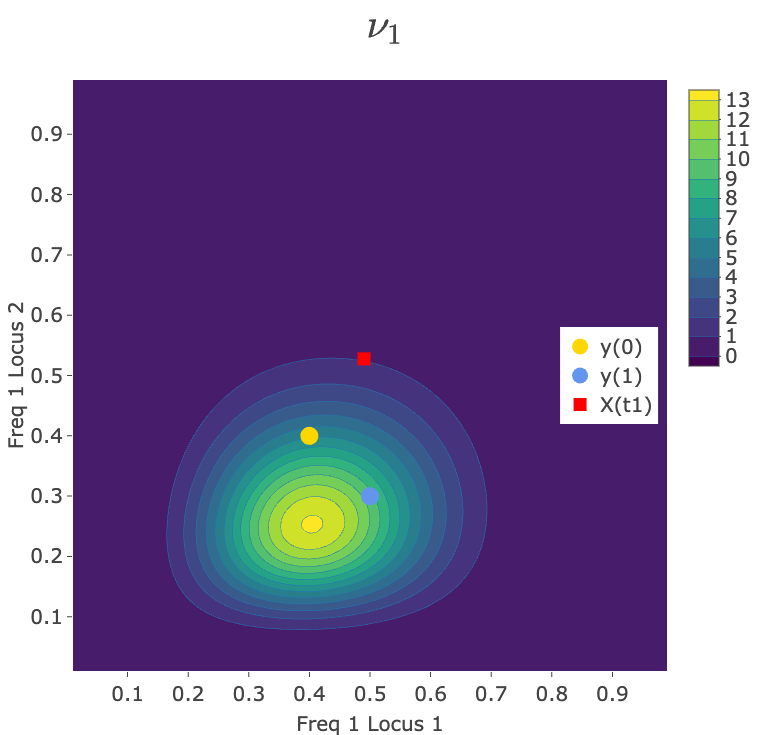}
    \else 
	\text{\red{decomment \emph{figurestrue} in preamble}} \fi
    \end{subfigure}
\vspace{0.2cm}

    \centering
    \begin{subfigure}[t]{0.28\textwidth}
        \iffigures\includegraphics[width=\linewidth]{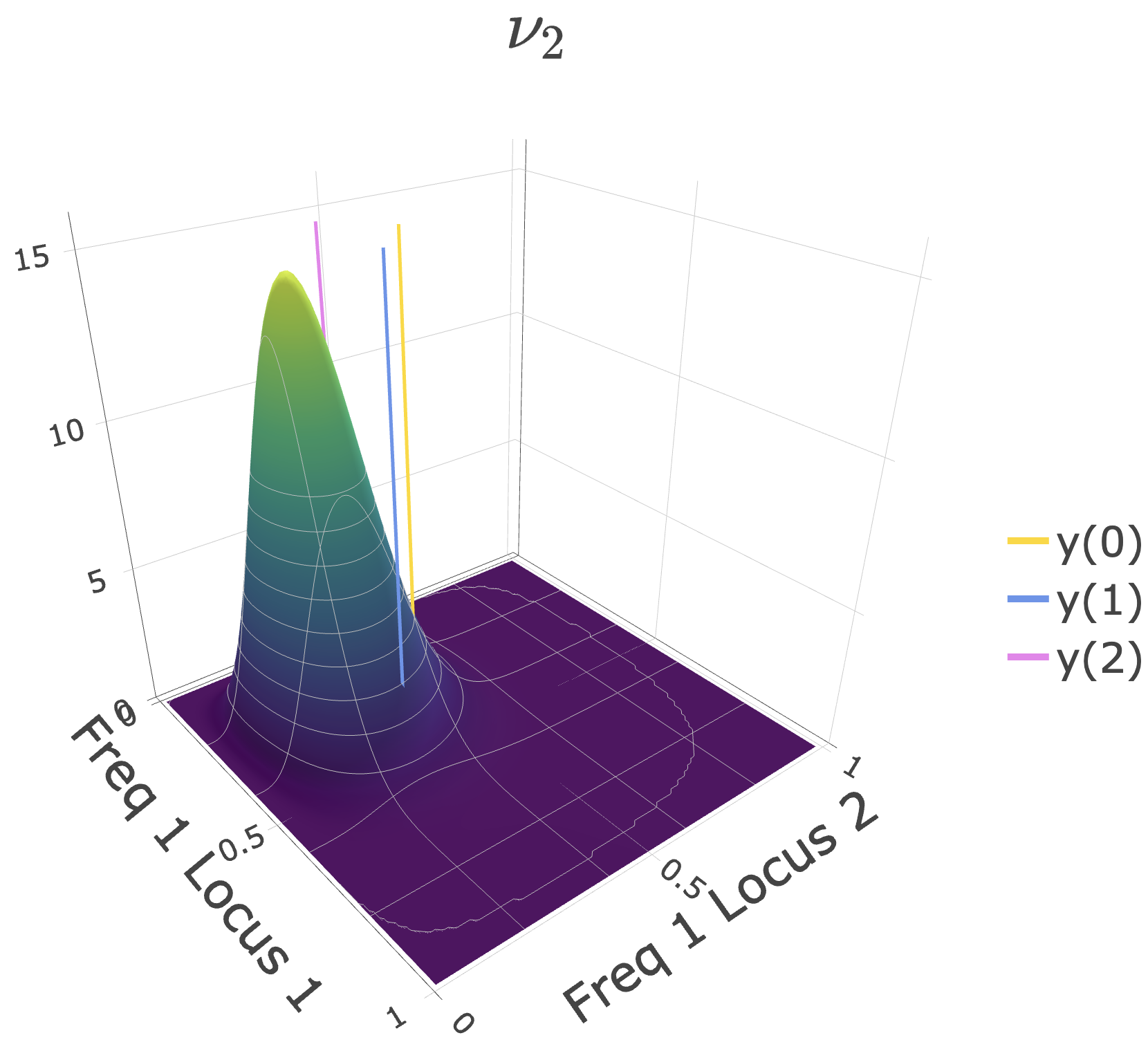}
	\else 
	\text{\red{decomment \emph{figurestrue} in preamble}} \fi
    \end{subfigure}
    \hspace{1.5cm}
    \begin{subfigure}[t]{0.28\textwidth}
        \iffigures\includegraphics[width=\linewidth]{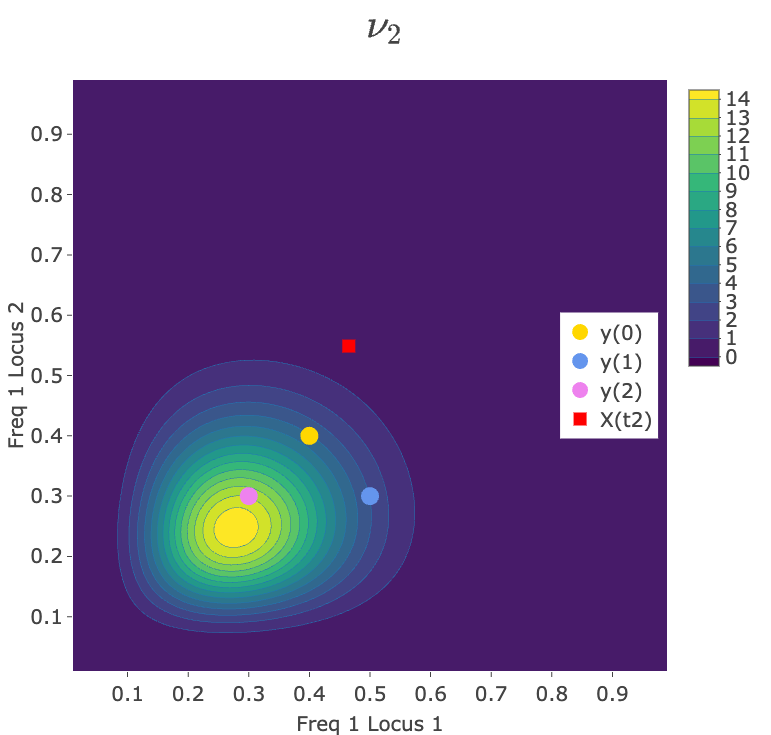}
    \else 
	\text{\red{decomment \emph{figurestrue} in preamble}} \fi
    \end{subfigure}
    
    \caption{\scriptsize 3D-plots and contours for the initial density (top row) and the filtering distributions $\nu_n$ for $n=0,1,2$. Bars (left) and dots (right) indicate the data (transformed into relative frequencies), the square (right) indicated the true signal value.}\label{fig:init-dens-nu}
\end{figure}

Figure \ref{fig:init-dens-nu} displays the initial density and the filtering distributions at three consecutive time points, for data values given by (4,6,4,6), (5,5,3,7), and (3,7,3,7), respectively. In the top row, the initial distribution, before collecting any data, is fairly well spread on the state space, with a slight skew towards lower frequencies of type 1 at either locus.
The subsequent three rows illustrate the filtering distributions computed according to the strategy outlined in Proposition \ref{prop:filtering}, using a naive Monte Carlo approximation of the required normalising constant and a Gillespie-based approximation of the dual process transition probabilities. 
Bars in left plots and dots in right plots indicate the data, transformed into relative frequencies of type 1, while the red square indicates the true signal value. The Figure shows how the filtering distribution uses the information present in the data to move the mass towards regions of the state space where the signal is believed to be.

The approach to computing the filtering and smoothing distributions described and illustrated here is not limited to this specific example, but  can in principle be applied to any parameterization of the model considered in this paper, for any number of types and loci. The filtering and smoothing distributions obtained through this method can then be used to derive estimators for the signal values at the time points of interest, as well as an approximate representation of the likelihood given a dataset which in turn is the basis for parameter inference. A comprehensive investigation of full inference for this model, together with an evaluation of the algorithm's performance, remain topics of great interest, which we leave for future work.


\section*{Acknowledgements}

The authors are grateful to two anonymous Referees for carefully reading the manuscript and providing constructive comments.
MR is partially supported by the European Union - Next Generation EU funds, PRIN-PNRR 2022 (P2022H5WZ9).


\section*{Declarations}

The authors have no competing interests to declare that are relevant to the content of this article.


\end{document}